\newtheorem{theorem}{Theorem}[section]
\newtheorem{lemma}[theorem]{Lemma}
\newtheorem{example}[theorem]{Example}
\newtheorem{definition}[theorem]{Definition}
\newtheorem{remark}[theorem]{Remark}
\numberwithin{equation}{section}
\newenvironment{proof}[1][Proof]{\noindent\textbf{#1.} }{\hfill $\Box$}
 \makeatletter\setlength{\textwidth}{15.0cm}
\begin{document}
\title{\textbf{Traveling Wave Solutions for Delayed Reaction-Diffusion Systems and Applications to Diffusive Lotka-Volterra Competition Models with Distributed Delays}}
\author{{{\sc Guo Lin}$^{1,}$\thanks{E-mail: ling@lzu.edu.cn}  \ and \
{\sc Shigui Ruan}$^{2,}$\thanks{E-mail: ruan@math.miami.edu} }\\[2mm]
$^1${\it School of Mathematics and Statistics, Lanzhou University,}\\
{\it Lanzhou, Gansu 730000, People's Republic of China}\\
$^2${\it Department of Mathematics, University of Miami,}\\
{\it P. O. Box 249085, Coral Gables, FL 33124-4250, USA}}
\maketitle

\begin{abstract}
This paper is concerned with the traveling wave solutions of delayed reaction-diffusion systems. By using Schauder's fixed point theorem, the existence of traveling wave solutions is reduced to the existence of generalized upper and lower solutions. Using the technique of contracting rectangles, the asymptotic behavior of traveling wave solutions for delayed diffusive systems is obtained. To illustrate our main results,  the existence,
nonexistence and asymptotic behavior of positive traveling wave solutions of diffusive Lotka-Volterra competition systems with distributed
delays are established. The existence of nonmonotone traveling wave solutions of diffusive Lotka-Volterra competition systems is also discussed.
In particular, it is proved that  if there exists instantaneous self-limitation effect, then the large delays appearing
in the intra-specific competitive terms may not affect the existence and asymptotic behavior  of traveling wave solutions.

\textbf{Keywords}: Nonmonotone traveling wave solutions; contracting rectangle; invariant region; generalized upper and lower solutions.

\textbf{AMS Subject Classification (2010)}:  35C07, 37C65, 35K57.
\end{abstract}

\section{Introduction}
\noindent

In studying the nonlinear dynamics of delayed reaction-diffusion systems, one of the important topics is the existence of traveling wave solutions because of their significant roles in biological invasion and epidemic spreading,  we refer to Ai \cite{ai}, Gourley and Ruan \cite{gourleyruan}, Fang and Wu \cite{fang}, Faria et al. \cite{fa}, Faria and Trofimchuk \cite{fa1,fa2}, Huang and Zou \cite{huangzou2}, Kwong and Ou \cite{ko}, Li et al. \cite{llr}, Liang and Zhao \cite{liangzhao}, Lin et al. \cite{llm}, Ma \cite{ma01,ma1}, Ma and Wu \cite{mawu}, Mei \cite{mei1}, Ou and Wu \cite{ou}, Schaaf \cite{schaaf}, Smith and Zhao \cite{smithzhao}, Thieme and Zhao \cite{t}, Wang \cite{wanghy}, Wang et al. \cite{wang1,wangli1,wang3}, Wu and Zou \cite{wuzou2}, and Yi et al. \cite{yi}. It has been shown that delay may induce some differences of  traveling wave solutions between the delayed and undelayed systems, for example, the minimal wave speed (see Schaaf \cite{schaaf}, Zou \cite{jcam}) and the monotonicity of traveling wave solutions in scalar equations (see an example in Faria and Trofimchuk \cite{fa1}). In particular, the asymptotic behavior of traveling wave solutions, which is often formulated by the asymptotic boundary conditions, plays a very crucial role since it describes the propagation processes in different natural environments. For example, with proper asymptotic boundary value conditions, the traveling wave solutions of two species diffusive competition systems may reflect the coinvasion-coexistence of two invaders (see Ahmad and Lazer \cite{ah}, Li et al. \cite{llr}, Lin et al. \cite{llm}, Tang and Fife \cite{ta}), or exclusion between an invader and a resident (see Gourley and Ruan \cite{gourleyruan}, Huang \cite{huang}). Therefore, understanding the asymptotic behavior is a fundamental issue in the study of traveling wave solutions. Moreover, very detailed asymptotic behavior of traveling wave solutions has also been studied due to the development in mathematical theory of delayed reaction-diffusion systems, such as in the asymptotic stability and uniqueness of traveling wave solutions (see Mei et al. \cite{mei1}, Volpert et al. \cite{volpert}, Wang et al. \cite{wang3}).

To obtain the asymptotic behavior of traveling wave solutions in delayed reaction-diffusion systems, there are several methods. The first is based on the monotonicity of traveling wave solutions, which is often considered under the assumption of quasimonotonicity  in the sense of proper ordering (Huang and Zou \cite{huangzou2}, Ma \cite{ma01}, Wang et al. \cite{wang1}, Wu and Zou \cite{wuzou2}). The second is to construct proper auxiliary functions, such as the upper and lower solutions (Li et al. \cite{llr}, Lin et al. \cite{llm}). When these methods fail, a fluctuation technique is utilized to study the asymptotic behavior of traveling wave solutions if the system satisfies the locally quasimonotone condition near the unstable steady state (Ma \cite{ma1}, Wang \cite{wanghy}). Some other results have also been presented for (fast) traveling wave solutions of  scalar equations (Faria and Trofimchuk \cite{fa1,fa2}, Kwong and Ou \cite{ko}).

Of course, before considering the asymptotic behavior of traveling wave solutions of delayed systems, we must establish the existence of nontrivial traveling wave solutions. To obtain the existence of traveling wave solutions of delayed systems, Wu and Zou \cite{wuzou2} used a monotone iteration scheme if a delayed system is cooperative in the sense of proper ordering, see also Ma \cite{ma01}. Li et al. \cite{llr} further considered the existence of traveling wave solutions in competition systems by a cross iteration technique. Very recently, Ma \cite{ma1} studied the traveling wave solutions of a locally monotone delayed equation by constructing auxiliary monotone equations, see also Wang \cite{wanghy}, Yi et al. \cite{yi}. Moreover, by regarding the time delay as a parameter, the existence of traveling wave solutions was also studied by the perturbation method or Banach fixed point theorem, see Gourley and Ruan \cite{gourleyruan}, Ou and Wu \cite{ou}.

In this paper, we first study the existence and asymptotic behavior of traveling wave solutions in the following delayed reaction-diffusion system
\begin{equation}\label{cfu}
\frac{\partial v_i(x,t)}{\partial t}=d_i\Delta
v_i(x,t)+f_i(v_t(x)),
\end{equation}
where $x\in\mathbb{R},t>0,$ $v=(v_1, v_2, \cdots, v_n)\in \mathbb{R}^n, d_1,d_2, \cdots, d_n$ are positive  constants, $v_t(x):=v(x,t+s), s\in [-\tau, 0]$ and $\tau >0$ is the time delay, therefore, $f_i: C([-\tau, 0], \mathbb{R}^n)\to \mathbb{R},$ here $C([-\tau, 0], \mathbb{R}^n)$ is the space of continuous functions defined on $[-\tau, 0]$ and valued in $\mathbb{R}^n,$ which is a Banach space equipped with the supremum norm.

To overcome the difficulty arising from the deficiency of comparison principle, we introduce the definition of generalized upper and lower solutions of the corresponding wave system of delayed system \eqref{cfu}.  By using Schauder's fixed point theorem, the existence of traveling wave solutions is reduced to the existence of generalized upper and lower solutions. Motivated by the idea of contracting rectangles in evolutionary systems, we establish an abstract conclusion on the asymptotic behavior of positive traveling wave solutions in general partial functional differential equations. Subsequently, we consider two scalar delayed equations with diffusion in population dynamics (see Ma \cite{ma1} and Zou \cite{jcam}), which implies that our methods can also be applied to some well studied models.

In population dynamics, the following Lotka-Volterra reaction-diffusion system with distributed delay has been widely studied
\begin{equation}
\dfrac{\partial u_{i}(x,t)}{\partial t}=d_{i}\Delta
u_{i}(x,t)+r_{i}u_{i}(x,t)\left[ 1-\sum_{j=1}^{n}c_{ij}\int_{-\tau
}^{0}u_{j}(x,t+s)d\overline{\eta} _{ij}(s)\right] ,  \label{1}
\end{equation}
in which $i\in \{1,2,\cdots,n\}=:I,$ $x\in \Omega\subseteq
\mathbb{R}^k,k\in\mathbb{N}, t>0,$ $u=(u_1,u_2,
\cdots, u_n)\in\mathbb{R}^n,$ $u_i(x,t)$ denotes the density
of the $i-$th competitor at time $t$
and in location $x\in \Omega,$ $d_i>0,r_i>0, c_{ii}>0$ and
$c_{ij}\ge 0$ are constants for $i,j\in I,i\neq j.$ We also suppose that
\[
\overline{\eta} _{ij}(s) \text{  is nondecreasing on }[-\tau,0]
\text{ and }\overline{\eta} _{ij}(0)-\overline{\eta} _{ij}(-\tau)=1,
\]
which will be imposed throughout the paper. Let
\[
a_i=\overline{\eta}_{ii}(0)-\overline{\eta}_{ii}(0-), i\in I,
\]
and set
\[
{\eta }_{ii}(s)= \begin{cases}\overline{\eta} _{ii}(s),s\in \lbrack
-\tau ,0),\\
\overline{\eta} _{ii}(0-),s=0, \end{cases}{\eta
}_{ij}(s)=\overline{\eta }_{ij}(s), i,j\in I,i\neq j.
\]
Clearly, $a_i>0$ implies the existence of instantaneous self-limitation effect in population dynamics.

The dynamics of \eqref{1} has been studied by several authors, for
example, Gourley and Ruan \cite{gourleyruan}, Fang and Wu \cite{fang}, Li et al. \cite{llr},
Lin et al. \cite{llm}, Martin and Smith \cite{martin2} and Ruan and Wu \cite{rw}. More
precisely, when $\Omega$ is a bounded domain and \eqref{1} is equipped with
the Neumann boundary condition, Martin and Smith \cite{martin2} proved
that if the initial values of \eqref{1} are positive and
\begin{equation}\label{2}
\sum_{j=1}^n c_{ij} (c_{jj}a_j)^{-1}<2, i\in I,
\end{equation}
then the unique mild  solution to \eqref{1} satisfies
\begin{equation}\label{c}
u_i(x,t)\to u_i^*, t\to \infty, i\in I, x\in \Omega,
\end{equation}
hereafter $u^*=(u_1^*,u_2^*,\cdots,u_n^*)$ is the unique spatially
homogeneous positive steady state of \eqref{1}, of which the existence can be
obtained by \eqref{2}. It is well known that  $a_i<1$ implies the existence of
time delay in intra-specific competition, which often leads to some
significant differences between the dynamics of delayed and
undelayed models if the time delay is large. For  example, the
following Logistic and Hutchinson equations
\[
\frac{du(t)}{dt}=u(t)(1-u(t)),\,\,\,\,\,\,\frac{du(t)}{dt}=u(t)(1-u(t-\tau
)),\,\,\,\,\,\,\tau >0
\]
exhibit dramatically different dynamics, we refer to Ruan \cite{ruan} for detailed analysis
on these two equations and Hale and
Verduyn Lunel \cite{hale} and Wu \cite{wu} for
fundamental theories on delayed equations.
However, \eqref{c} does not depend on the size of delays and
the distribution of $\eta_{ii}(s)$ for $s\in [-\tau,0)$. Similar
phenomena can be founded in the corresponding functional
differential equations, see Smith \cite[Section 5.7]{smith}.

In particular, Li et al. \cite{llr} and Lin et al. \cite{llm} have established the existence
of traveling wave solutions to \eqref{1}, which models the
invasion-coexistence scenario of multiple competitors. However,
these results only hold if there exists $\tau_0\in [0,\tau]$ small
enough satisfying $\int_{-\tau_0 }^{0}d\overline{\eta} _{ii}(s)=1,
i\in I,$ which ensures the so-called exponentially monotone
condition such that the upper and lower solutions are admissible. If $\tau_0$ is large, then we cannot apply these techniques and results to study the existence and asymptotic behavior of traveling wave solutions of  \eqref{1}.  In this paper, we shall consider the existence and further properties of nontrivial
traveling wave solutions of \eqref{1} with $a_i>0,i\in I$.

Using the results on the existence and asymptotic behavior of traveling wave solutions in system \eqref{cfu}, model \eqref{1} with $\Omega =\mathbb{R}$ is studied by presenting the existence, nonexistence and asymptotic behavior of positive traveling wave solutions. In particular, due to less requirements for auxiliary functions, we  obtain some sufficient conditions on the existence of nonmonotone traveling wave solutions of  \eqref{1} with $a_i=1, i\in I$. Note that these
conclusions remain true if $\tau=0,$ we thus confirm  the conjecture about the existence of nonmonotone traveling wave solutions of competitive systems, which was proposed by Tang and Fife \cite[the last paragraph]{ta}.

The rest of this paper is organized as follows. In section 2, we list some preliminaries. Using  contracting rectangles, the asymptotic behavior of traveling wave solutions of  general partial functional differential equations \eqref{cfu} is established in section 3, and is applied to two examples  considered by Ma \cite{ma1} and Zou \cite{jcam}. In section 4, we introduce the generalized upper and lower solutions and study the existence of traveling wave solutions in \eqref{cfu}. In section 5, we investigate the  traveling wave solutions of the Lotka-Volterra system \eqref{1}, including the existence, nonexistence, asymptotic behavior and monotonicity. This paper ends with a brief discussion of our methods and results.

\section{Preliminaries}
\noindent

In this paper, we shall use the standard partial ordering and interval
notations in $\mathbb{R}^n.$ Namely, if $u=(u_1,u_2,\cdots,
u_n),v=(v_1,v_2,\cdots, v_n)\in\mathbb{R}^n, $ then $u\ge v$ iff $u_i\ge v_i, i\in
I;$ $u> v$ iff $u\ge v $ but $ u_i > v_i$ for some $i\in I;$ $u\gg
v$ iff $u_i> v_i, i\in I.$ Moreover, $X$ will be interpreted as
follows
\[
X=\{ u: u\text{ is a bounded and uniformly continuous function from
}\Bbb{R}\text{ to }\Bbb{R}^n\},
\]
which is a Banach space equipped with the supremum norm $\| \cdot \|$. If $a,b\in
\mathbb{R}^n$ with $a\le b,$ then
\[
X_{[a,b]}=\{u \in X: a\le u(x)\leq b, x\in \mathbb{R}\}.
\]
Let $u(x)=(u_1(x), \cdots,u_n(x) ), v(x)=(v_1(x), \cdots,v_n(x) )\in X,$
then $u(x)\ge v(x)$ implies that $u(x)\ge v(x) $ for all $x\in
\mathbb{R};$ $u(x)> v(x)$ is interpreted as $u(x)\ge v(x)$ but
$u(x)>v(x)$ for some $x\in \mathbb{R};$ and $u(x)\gg v(x)$  if $u(x)>v(x)$  and for each $i\in I,$ there exists
$x_i\in  \mathbb{R}$ such that $u_i(x_i)>v_i(x_i).$ $u(x)$ is a nonnegative, positive
and strictly positive function iff $u(x)\ge 0, u(x)>0$ and $u(x)\gg 0$, respectively.

Consider the Fisher equation
\begin{equation}\label{0.1}
\begin{cases}
\frac{\partial z(x,t)}{\partial t}=d \Delta z(x,t)+rz (x,t)\left[
1-z (x,t)\right] ,\\
z(x,0)=z(x)\in X_{[0,1]}
\end{cases}
\end{equation}
with $x\in\mathbb{R}, t>0, d>0,r>0$ and $n=1$ in the definition of $X$.
\begin{lemma}\label{le1.1}
For \eqref{0.1}, we have the following conclusions.
\begin{enumerate}
\item[(i)] \eqref{0.1} admits a unique solution $z(\cdot,t)\in
X_{[0,1]}$ for all $t>0.$
\item[(ii)] If $\overline{z}(\cdot,t), \underline{z}(\cdot,t)\in X$ for $ t>0$ such that
that
\begin{eqnarray*}
\frac{\partial \overline{z}(x,t)}{\partial t} &\geq &d\Delta \overline{z}%
(x,t)+r\overline{z}(x,t)\left[ 1-\overline{z} (x,t)\right] , \\
\frac{\partial \underline{z}(x,t)}{\partial t} &\le &d\Delta \underline{z}%
(x,t)+r\underline{z}(x,t)\left[
1-\underline{z} (x,t)\right] ,
\end{eqnarray*}
then $\overline{z}(x,t)$ and $ \underline{z}(x,t)$ are upper and
lower solutions to \eqref{0.1}, respectively. Furthermore, we have
\[
\overline{z}(x,t)\ge z(x,t)\ge  \underline{z}(x,t)
\]
if $\overline{z}(x,0)\ge z(x)\ge  \underline{z}(x,0).$
\item[(iii)]  If $z(x)>0$ and  $c\in (0,2\sqrt{dr}),$  then
\[
\liminf_{t\to\infty}\inf_{|x|<ct}z(x,t)=\limsup_{t\to\infty}\sup_{|x|<ct}z(x,t)=1.
\]
\end{enumerate}
\end{lemma}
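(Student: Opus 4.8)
The plan is to dispatch the three items in turn: (i) and (ii) are standard facts about scalar semilinear parabolic equations, whereas (iii) is the classical spreading (``hair-trigger'') property of the Fisher--KPP equation and is where the real work lies. For (i), I would regard \eqref{0.1} as the abstract equation $z'(t)=dAz(t)+F(z(t))$ on the Banach space $X$, with $A$ the generator of the heat semigroup and $F$ the substitution operator $F(z)(x)=rz(x)(1-z(x))$, which is locally Lipschitz on bounded subsets of $X$. The variation-of-constants formula together with Banach's contraction principle produces a unique local mild solution. Since $F(0)=F(1)=0$, the constant functions $0$ and $1$ are, respectively, a lower and an upper solution of \eqref{0.1}, so part (ii), applied with these constants, forces $0\le z(\cdot,t)\le 1$ throughout the existence interval; this a priori bound excludes finite-time blow-up, the solution is global, and parabolic smoothing makes it classical for $t>0$.

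For (ii), let $\overline z$ be an upper solution and $\underline z$ a lower solution with $\overline z(\cdot,0)\ge\underline z(\cdot,0)$, and set $w=\overline z-\underline z$. Subtracting the two differential inequalities and applying the mean value theorem to $s\mapsto rs(1-s)$ gives $w_t\ge d\Delta w+b(x,t)w$, where $b(x,t)=r\bigl(1-\overline z(x,t)-\underline z(x,t)\bigr)$ is bounded on $\mathbb{R}\times[0,T]$. Since $w$ is bounded, the comparison principle for parabolic inequalities on the unbounded domain $\mathbb{R}$ applies (for instance, after the substitution $w=e^{\beta t}\widetilde w$ with $\beta>\|b\|_\infty$, via a maximum principle for bounded functions, or by comparison with the heat equation), so $w\ge 0$, i.e.\ $\overline z\ge\underline z$. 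Taking $\underline z=z$ (respectively $\overline z=z$) specializes this to the comparison with the genuine solution of \eqref{0.1}.

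For (iii), since $0\le z\le 1$ by (i) we already have $\sup_{|x|<ct}z(x,t)\le 1$, so it remains to prove $\liminf_{t\to\infty}\inf_{|x|<ct}z(x,t)\ge 1$ for every $c\in(0,2\sqrt{dr})$. By the strong maximum principle $z(\cdot,t)>0$ on all of $\mathbb{R}$ for $t>0$; replacing $z(\cdot)$ by $z(\cdot,t_0)$ (a mere time shift) we may assume $z(x)>0$ for all $x$, hence $z\ge\delta_0>0$ on any prescribed compact interval. Two generalized subsolutions drive the argument. The first is \emph{stationary}: whenever $\nu_L:=d\pi^2/(4L^2)$, the principal Dirichlet eigenvalue of $-d\,\partial_{xx}$ on $(-L,L)$, satisfies $\nu_L<r$, the function equal to $\varepsilon\cos(\pi x/(2L))$ on $[-L,L]$ and to $0$ outside is a generalized subsolution of \eqref{0.1} as soon as $\varepsilon\le 1-\nu_L/r$; the solution of \eqref{0.1} started from this bump is nondecreasing in $t$, bounded by $1$, and its limit is a positive bounded stationary state, hence $\equiv 1$, so by (ii) $z(x,t)\to 1$ as $t\to\infty$, locally uniformly in $x$. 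The second is \emph{moving}: with $\mu=c/(2d)$, so that $d\mu^2=c^2/(4d)<r$, with $L$ large enough that $r-d\mu^2-\nu_L>0$, and with $\varepsilon$ small, the profile $\underline z(\xi)=\varepsilon e^{-\mu\xi}\cos(\pi\xi/(2L))$ on $|\xi|\le L$ and $0$ on $|\xi|>L$ satisfies $\underline z_t\le d\underline z_{\xi\xi}+c\underline z_\xi+f(\underline z)$ in the moving coordinate $\xi=x-ct$, so $\underline z(x-ct)$, and symmetrically $\underline z(-x-ct)$, is a generalized subsolution of \eqref{0.1} propagating with speed $c$. Comparing $z$ with time-shifted translates of this moving subsolution (whose admissibility at the starting time is supplied by the stationary step) shows that the set on which $z$ exceeds a fixed positive level spreads with at least speed $c$; iterating this together with the local convergence to $1$ upgrades the lower bound to $1$, which is the asserted $\inf_{|x|<ct}z(x,t)\to 1$.

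The routine parts are (i) and (ii). The main obstacle is the assembly of (iii): the two sub/supersolutions are themselves elementary, but converting ``a bump of small fixed height travels with speed $c$'' together with ``$z\to 1$ on compact sets'' into the uniform conclusion $\inf_{|x|<ct}z\to 1$ requires the careful covering/iteration argument going back to Aronson and Weinberger. One must also verify that the cornered profiles above are legitimate (generalized) subsolutions of \eqref{0.1}: this holds because at $\pm L$ the first derivative jumps upward, so the distributional second derivative contributes a nonnegative measure there, i.e.\ precisely in the direction that preserves the subsolution inequality.
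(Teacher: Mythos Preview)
Your sketch is correct, and in fact it supplies far more than the paper does: the paper does not prove Lemma~\ref{le1.1} at all but simply records it as a preliminary, referring to Fife and to Ye et al.\ for parts (i)--(ii) and to Aronson and Weinberger for part (iii). What you have written is precisely the classical route from those references---mild-solution theory plus invariance of $[0,1]$ for (i), the linearized comparison $w_t\ge d\Delta w+b(x,t)w$ with bounded coefficient for (ii), and the Aronson--Weinberger hair-trigger argument via compactly supported cosine-bump subsolutions (stationary and moving) for (iii). The only point worth flagging is that the ``covering/iteration'' step you allude to in (iii) is genuinely the delicate part and is not written out here; since the paper itself relegates the whole lemma to a citation, your level of detail is already well beyond what is required, and pointing to Aronson--Weinberger for that last step is entirely appropriate.
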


For (i) and (ii) of Lemma \ref{le1.1}, we refer to  Fife \cite{fife} and Ye et al. \cite{yeli}. (iii) of Lemma \ref{le1.1} is the classical theory of asymptotic spreading, see Aronson and Weinberger \cite{aron2}.

\section{Asymptotic Behavior of Traveling Wave Solutions}
\noindent

Consider the following functional differential equation corresponding to \eqref{cfu}
\begin{equation}\label{f}
\frac{dl_i(t)}{dt}=f_i(l_{t}), i\in I,
\end{equation}
in which $l=(l_1, l_2, \cdots, l_n)\in\mathbb{R}^n,$ $f_i$ is defined by \eqref{cfu} and satisfies the following assumptions:
\begin{description}
\item[(H1)] there exists ${E}\gg {0}$ such that $f_i(\widehat{{0}})=f_i(\widehat{{E}})=0, $ where $\widehat{\cdot}$ denotes the constant valued function in $C([-\tau, 0], \mathbb{R}^n)$ and ${E}=(E_1, E_2,  \cdots, E_n)\in\mathbb{R}^n;$
\item[(H2)] there exist $\overline{{E}}\ge  {E}\ge \underline{{E}} \ge   {0}$ such that
$[\widehat{\underline{{E}}},\widehat{\overline{{E}}} ]$ is a positively invariant ordered interval of \eqref{f}, where
\[
\overline{{E}}=(\overline{E}_1,\overline{E}_2, \cdots, \overline{E}_n),\,\, \underline{{E}}=(\underline{E}_1, \underline{E}_2, \cdots, \underline{E}_n);
\]
\item[(H3)] if $u\in C([-\tau, 0], \mathbb{R}^n)$ and $\widehat{{0}}\le u(t+s)\le \widehat{b(0)}$ for $s\in [- \tau, 0],$ then $f_i(u_t):C([-\tau, 0], \mathbb{R}^n) \to \mathbb{R} $ is Lipschitz continuous in the sense of supremum norm, here $b(0)\ge \overline{{E}}$ is a constant vector clarified by (H4)-(H5);
\item[(H4)]  there exists a one-parameter family of ordered intervals given by
\[
\sum (y)=[\widehat{{a}}(y),\widehat{{b}}(y)]
\]%
such that $a(0)\le \underline{E}\le \overline{{E}}\le b(0)$ and for $0\leq y_{1}\leq y_{2}\leq 1$
\[
{0}\le {a}(0)\leq {a}(y_{1})\leq {a}(y_{2})\leq
{a}(1)={E}={b}(1)\leq {b}(y_{2})\leq {b}%
(y_{1})\leq {b}(0),
\]%
where ${a}(y)$ and ${b}(y)$ are continuous in $y\in [ 0,1]$;
\item[(H5)] $\sum (y)$ is a \emph{strict} contracting rectangle, namely, let
\[
{a}(y)=(a_1(y), a_2(y), \cdots, a_n(y)), \,\, {b}(y)=(b_1(y), b_2(y), \cdots, b_n(y)),
\]
then for any $y\in (0,1)$ and  $u\in \sum (y),$ we have
\[
f_i(u) >0  (f_i(u) < 0)\text{ if }u_i(0)=a_i(y) (u_i(0)=b_i(y)), i\in I.
\]
\end{description}

To continue our discussion, we now introduce the following definition of traveling wave solutions of \eqref{cfu}.
\begin{definition}{\rm
A \emph{traveling wave solution} of \eqref{cfu} is a special solution
\[
v_i(x,t)=\phi_i(x+ct), i\in I,
\]
where $c>0$ is the wave speed and $\Phi= (\phi_1,\phi_2, \cdots,\phi_n)\in C^2(\mathbb{R},\mathbb{R}^n)$ is the wave profile.}
\end{definition}

By the definition, $\Phi= (\phi_1,\phi_2,\cdots, \phi_n)$ satisfies
\begin{equation}\label{fu}
d_i\phi''_i(\xi)-c\phi'_i(\xi)+f_i^c(\Phi_{\xi})=0,i\in I,\xi\in\mathbb{R};
\end{equation}
in which $f_i^c(\Phi_{\xi}):C([-c\tau, 0],\mathbb{R}^n)\to \mathbb{R}$ is defined by
\[
f_i^c(\Phi_{\xi})=f_i(\Phi (\xi+ cs)),  s\in [-\tau, 0], i\in I.
\]

Using the contracting rectangles, we present the following asymptotic boundary conditions of traveling wave solutions.
\begin{theorem}\label{t1-cr}
Assume (H1)-(H5). Let $\Phi \in X$ be a positive solution of \eqref{fu} with
\begin{equation}\label{aa}
\overline{{E}}\gg \limsup_{\xi \to \infty } \Phi (\xi )\ge \liminf_{\xi \to \infty } \Phi (\xi ) \gg {\underline{E}}.
\end{equation}
Then $\lim_{\xi \to \infty}\Phi (\xi)={E}$ if $\Phi'$ and $\Phi''$ are uniformly bounded.
\end{theorem}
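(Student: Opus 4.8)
The strategy is a \emph{contracting‑rectangle squeeze} applied to the band of asymptotic oscillation of the profile. Put $\overline m=\limsup_{\xi\to\infty}\Phi(\xi)$ and $\underline m=\liminf_{\xi\to\infty}\Phi(\xi)$, understood componentwise (finite since $\Phi\in X$). Combining \eqref{aa} with (H4) gives
\[
a(0)\le\underline E\ll\underline m\le\overline m\ll\overline E\le b(0),
\]
so the set $\{y\in[0,1]:\ a_i(y)\le\underline m_i\le\overline m_i\le b_i(y)\text{ for all }i\in I\}$ contains a neighbourhood of $0$; since $a$ is nondecreasing, $b$ nonincreasing, and both continuous in $y$, this set is a closed interval $[0,y^{*}]$ with $y^{*}\in(0,1]$ and $a(y^{*})\le\underline m\le\overline m\le b(y^{*})$. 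Because $a(1)=b(1)=E$, once $y^{*}=1$ is shown we get $E\le\underline m\le\overline m\le E$, i.e. $\lim_{\xi\to\infty}\Phi(\xi)=E$, which is the assertion. So assume for contradiction that $y^{*}\in(0,1)$.

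By maximality of $y^{*}$ and continuity of $a,b$, at $y=y^{*}$ at least one of the defining inequalities is an equality at some coordinate; a short case analysis (using $a(y)\le E\le b(y)$ for $y\le1$ to discard coordinates with $a_i(y^{*})=b_i(y^{*})$, which impose no obstruction to enlarging $y^{*}$) produces an index $j\in I$ with $a_j(y^{*})<b_j(y^{*})$ for which either $\overline m_j=b_j(y^{*})$ or $\underline m_j=a_j(y^{*})$. Assume the first case; the second is symmetric. Choose $s_k\to\infty$ with $\phi_j(s_k)\to\overline m_j$. Two elementary facts, both powered by the uniform bounds on $\Phi'$ and $\Phi''$, are needed: (a) $\phi_j'(s_k)\to0$ — otherwise Lipschitz continuity of $\phi_j'$ would keep $\phi_j'$ of one sign and bounded away from $0$ on a fixed‑length window around $s_k$, pushing $\phi_j$ strictly above $\overline m_j$ at $s_k+\rho$ or $s_k-\rho$ for large $k$; (b) $\limsup_k\phi_j''(s_k)\le0$ — here one first notes from \eqref{fu} and the Lipschitz continuity of $f_j$ in (H3) that $\phi_j''$ is Lipschitz in $\xi$, hence uniformly continuous, and then a Taylor expansion at $s_k$ together with (a) and $\phi_j(s_k)\to\overline m_j$ excludes $\phi_j''(s_k)\ge\delta>0$ along a subsequence. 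Passing to a subsequence, $\phi_j''(s_k)\to\ell\le0$, so by \eqref{fu}
\[
f_j^c(\Phi_{s_k})=c\phi_j'(s_k)-d_j\phi_j''(s_k)\longrightarrow -d_j\ell\ge0.
\]

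Finally, the translates $\psi_k(s):=\Phi(s_k+cs)$, $s\in[-\tau,0]$, form a bounded equicontinuous family (again since $\Phi'$ is bounded), so a subsequence converges in $C([-\tau,0],\mathbb{R}^n)$ to some $\psi$. The inequalities $a_i(y^{*})\le\underline m_i\le\overline m_i\le b_i(y^{*})$ pass to the limit, giving $\psi\in\sum(y^{*})$, while $\psi_j(0)=\lim_k\phi_j(s_k)=b_j(y^{*})$; and since $\widehat 0\le\psi_k\le\widehat{b(0)}$ eventually, continuity of $f_j$ from (H3) yields $f_j(\psi)=\lim_k f_j^c(\Phi_{s_k})=-d_j\ell\ge0$. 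But $y^{*}\in(0,1)$ and $\psi\in\sum(y^{*})$ with $\psi_j(0)=b_j(y^{*})$, so the strict contracting‑rectangle condition (H5) forces $f_j(\psi)<0$ — a contradiction. Hence $y^{*}=1$ and the proof is complete. The crux, I expect, is step (b) together with the compactness argument: turning the purely scalar fact $\phi_j(s_k)\to\overline m_j$ into the inequality $f_j(\psi)\ge0$ at a genuine limiting history segment $\psi\in\sum(y^{*})$, which is exactly where the hypotheses that $\Phi'$ and $\Phi''$ are uniformly bounded are used.
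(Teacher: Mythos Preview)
Your proof is correct and follows essentially the same contracting-rectangle strategy as the paper: define the maximal $y^{*}$ with $[a(y^{*}),b(y^{*})]\supset[\underline m,\overline m]$, locate a coordinate where the box is touched, and use a fluctuation-type argument to contradict (H5). Your treatment is in fact more careful than the paper's, which simply invokes ``the fluctuation lemma'' for the sign of $d_1\phi_1''-c\phi_1'$ and asserts that (H5) gives $\liminf_m f_1^c(\Phi_{\xi_m})>0$ without spelling out the Arzel\`a--Ascoli extraction of a limiting history segment $\psi\in\sum(y^{*})$ that you provide.
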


\begin{proof}
Denote
\[
\limsup_{\xi \to \infty } \Phi (\xi )=\Phi^+, \liminf_{\xi \to \infty } \Phi (\xi )=\Phi^-
\]
with
\[
\Phi^{\pm}=(\phi_1^{\pm}, \phi_2^{\pm}, \cdots, \phi_n^{\pm}).
\]
Were the statement false, then $\Phi^+ > \Phi^-$ holds and \eqref{aa} implies that there exists $y\in (0,1)$ such that
\[
{a}(y)\leq \Phi ^{-}\leq \Phi ^{+}\leq {b}(y).
\]%
In particular, let $y_{1}\in (0,1)$ be the largest $y$ such that the above
is true, then $y_{1}$ is well defined. Without loss of generality, we assume
that
\[
\phi _{1}^{-}=a_{1}(y_{1})
\]%
with ${a}(y)=(a_{1}(y),a_{2}(y),\cdots ,a_{n}(y)).$

Due to the uniform boundedness of $\phi_1'$ and $\phi_1'',$  the fluctuation lemma of continuous functions implies that there exists $\{ \xi_m\}, m\in \mathbb{N},$ with $\lim_{m\to\infty} \xi_m =\infty$ such that
\[
\liminf_{\xi\to \infty}\phi_1(\xi)=\lim_{m\to\infty }\phi_1(\xi_m)=a_1(y_1)\le \limsup_{\xi\to \infty}\phi_1(\xi)\le b_1(y_1)
\]
and
\[
\liminf_{m\to \infty} (d_1 \phi_1''(\xi_m)-c \phi_1'(\xi_m)) \ge 0.
\]
At the same time, (H5) leads to
\[
\liminf_{m\to \infty } f_1^c(\Phi_{\xi_m}) >0,
\]
and we obtain a contradiction between
\begin{equation}\label{ee}
\liminf_{m\to \infty} (d_1 \phi_1''(\xi_m)-c \phi_1'(\xi_m)+ f_1^c(\Phi_{\xi_m})) >  0
\end{equation}
and
\[
\liminf_{m\to \infty} (d_1 \phi_1''(\xi_m)-c \phi_1'(\xi_m)+ f_1^c(\Phi_{\xi_m}))=0.
\]
The proof is complete.
\end{proof}

\begin{remark}{\rm
In fact, the proof of the existence of traveling wave solutions often implies the uniform boundedness of $\Phi'$  and $\Phi'',$ so we will not discuss the boundedness in the following two examples.}
\end{remark}

Now, we  recall two scalar equations to give a simple illustration of the theorem.
We first consider the example in Zou \cite{jcam}.
\begin{example}{\rm
In \eqref{f}, let
\[
f(u_t)=u(t-\tau)[1-u(t)].
\]
Then traveling wave solutions of the corresponding partial functional differential equation has been established by  Zou \cite{jcam}.
}
\end{example}

Define $\underline{E}=0, \overline{E}=1+k$ for any $k>0$ and
\[
a(s)=s, b(s)=(1+k)(1-s)+s, s\in [0,1].
\]
Then $[a(s), b(s)]$ is a contracting rectangle of the corresponding functional differential equation such that Theorem \ref{t1-cr} is applicable to the study of traveling wave solutions. Let $u(x,t)=\rho (x+ct)$ be a traveling wave solution of
\begin{equation*}
\frac{\partial u(x,t)}{\partial t}= \triangle u(x,t)+ ru(x,t-\tau)[1-u(x,t)], r>0.
\end{equation*}
If $\rho (\xi),\xi\in \mathbb{R},$ is bounded and $\liminf_{\xi\to\infty}\rho (\xi) >0,$ then $\lim_{\xi\to \infty }\rho (\xi)=1$ by Theorem \ref{t1-cr}.

If the quasimonotone condition does not hold,  Ma \cite{ma1} studied the traveling wave solutions by constructing proper auxiliary systems. In particular, the traveling wave solutions of the following example have been well studied.
\begin{example}
{\rm
Consider
\begin{equation}\label{ni}
\frac{\partial w(x,t)}{\partial t}= \triangle w(x,t)-w(x,t)+e^2w(x,t-\tau)e^{-w(x,t-\tau)}
\end{equation}
and denote $f(w)=e^2 we^{-w}.$
}
\end{example}

Using the results in Smith \cite[Section 5.2]{smith}, we can give two auxiliary quasimonotone equations to study the dynamics of the corresponding functional differential equations of \eqref{ni}. In addition, Ma \cite{ma1} presented two auxiliary quasimonotone equations of \eqref{ni} and obtained the convergence of traveling wave solutions. Let
\[
w(x,t)=\rho (x+ct)
\]
be a traveling wave solution of \eqref{ni} and denote
\[
\liminf_{\xi\to \infty}\rho(\xi)=\rho^-, \limsup_{\xi\to \infty}\rho(\xi)=\rho^+,
\]
then the auxiliary equations in Ma \cite{ma1} imply that
\begin{equation}\label{m111}
f(e)=e^{3-e}\le \rho^- \le \rho^+ \le  e
\end{equation}
and $f(w)$ is monotone decreasing for $w\in [e^{3-e}, e].$
If $\rho^+ = e,$ then a discussion similar to that of \eqref{ee} implies that  $f(\rho^-) \ge e$ by the monotonicity of $f,$ which is impossible by \eqref{m111}.  By the monotonicity of $f$ and \eqref{m111}, we further obtain that
\[
e^{3-e}< f(f(e^{3-e}))\le \rho^- \le \rho^+ \le  f(e^{3-e})=f(f(e))<e.
\]
Let $\underline{E}=f(f(e^{3-e})), \overline{E}=f(f(e)),$ then $[\widehat{\underline{E}},\widehat{\overline{E}}]$ defines an invariant region of the corresponding functional differential equation of \eqref{ni}.

To continue our discussion, define
\[
f^2(w)=f(f(w)),
\]
then $f^2(w)$ satisfies the following properties:
\begin{description}
\item[(F1)] $f^2(w)$ is monotone increasing for $w\in [e^{3-e}, e];$
\item[(F2)] $f^2 (w) >w, w\in [e^{3-e}, 2)$ while $f^2 (w) <w, w\in (2,e].$
\end{description}
For  convenience,  we give the graph of $f^2$ in Figure \ref{fig1}.

\begin{figure}
\begin{center}
\includegraphics[width=110mm]{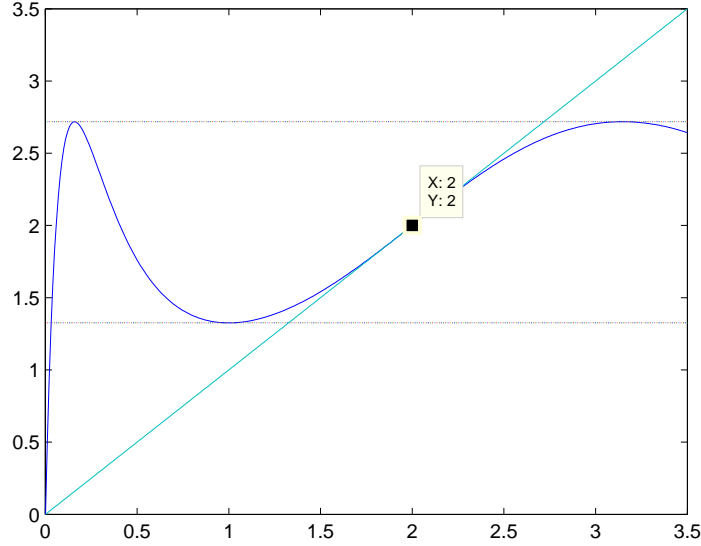}
\caption{The graph of the composition map $f^2$.}\label{fig1}
\end{center}
\end{figure}

Let $k=2, k_1= e^{3-e}(<2)$ and
\[
a(s)=sk+(1-s)k_{1}+\epsilon h(s), \,\,
b(s)=f(sk+(1-s)k_{1}),
\]
in which
\[
2h(s)=f^{2}(sk+(1-s)k_{1})-(sk+(1-s)k_{1})>0,
\]
and $\epsilon <1$ is small such that
\[
a(s)<2, s\in (0,1)
\]
and
\[
a(0)<f^2(e^{3-e})< f^2(e)=b(0).
\]
By (F2), $h(s)>0,s\in (0,1)$ and $h(s)=0,s=1.$

If $w(0)=a(s)$ with $a(s)\le w(-\tau)\le b(s),$ then
\begin{eqnarray*}
&&-w(0)+f(w(-\tau )) \\
&\geq &-a(s)+f(b(s)) \\
&=&-(sk+(1-s)k_{1}+\epsilon h(s))+f^{2}(sk+(1-s)k_{1}) \\
&=&(2- \epsilon)h(s)>0,s\in (0,1).
\end{eqnarray*}%
If $w(0)=b(s)$ with $a(s)\le w(-\tau)\le b(s),$ then%
\begin{eqnarray*}
&&-w(0)+f(w(-\tau )) \\
&\leq &-b(s)+f(a(s)) \\
&<&-b(s)+f(sk+(1-s)k_{1}) \\
&=&0,s\in (0,1),
\end{eqnarray*}
which implies that $[a(s),b(s)]$ satisfies (H5). Using Theorem \ref{t1-cr}, $\lim_{\xi\to \infty}\rho (\xi)=2$ holds.

\section{Generalized Upper and Lower Solutions}
\noindent

In this section, we shall study the existence of traveling wave solutions of delayed system \eqref{cfu} for any fixed $c>0$, where $f$ satisfies (H1)-(H3)  in Section 3. We first introduce the generalized upper and lower solutions of \eqref{fu} as follows.
\begin{definition}\label{ge}
{\rm
Assume that $\mathbb{T}\subset \mathbb{R}$ contains finite points of $\mathbb{R}.$ Then $ \overline{\Phi}=(\overline{\phi}_1, \overline{\phi}_2, \cdots, \overline{\phi}_n)\in X_{[0, \overline{E}]}$ and $ \underline{\Phi}=(\underline{\phi}_1, \underline{\phi}_2, \cdots, \underline{\phi}_n)\in X_{[0, \overline{E}]}$ are a pair of generalized upper and lower solutions of \eqref{fu} if for each $\xi\in \mathbb{R}\setminus \mathbb{T},$ $\overline{\Phi}''(\xi),\overline{\Phi}'(\xi), \underline{\Phi}''(\xi), \underline{\Phi}'(\xi)$ are bounded and continuous such that
\begin{equation}\label{geu}
d_i\overline{\phi}''_i(\xi)-c\overline{\phi}'_i(\xi)+f_i^c(\widetilde{\Phi}_{\xi})\le 0
\end{equation}
with $\widetilde{\Phi}(\xi)=(\widetilde{\phi}_1(\xi), \widetilde{\phi}_2(\xi), \cdots, \widetilde{\phi}_n(\xi))\in X_{[0, \overline{E}]}$ satisfying
\[
\underline{\phi}_j(\xi+cs)\le \widetilde{\phi}_j(\xi+cs)\le \overline{\phi}_j(\xi+cs), \widetilde{\phi}_i(\xi)= \overline{\phi}_i(\xi),s\in [-\tau, 0], i,j \in I,
\]
and
\begin{equation}\label{gel}
d_i\underline{\phi}''_i(\xi)-c\underline{\phi}'_i(\xi)+f_i^c(\widehat{\Phi}_{\xi})\ge 0
\end{equation}
with $\widehat{\Phi}(\xi)=(\widehat{\phi}_1(\xi), \widehat{\phi}_2(\xi), \cdots, \widehat{\phi}_n(\xi))\in X_{[0, \overline{E}]}$ satisfying
\[
\underline{\phi}_j(\xi+cs)\le \widehat{\phi}_j(\xi+cs)\le \overline{\phi}_j(\xi+cs), \widehat{\phi}_i(\xi)= \underline{\phi}_i(\xi),s\in [-\tau, 0], i,j \in I.
\]
}
\end{definition}

\begin{remark}{\rm
If $f$ is quasimonotone, then Definition \ref{ge} is equivalent to Ma \cite[Definition 2.2]{ma01}, Wu and Zou \cite[Definition 3.2]{wuzou2}; if $f$ is mixed quasimonotone, then Definition \ref{ge} becomes Lin et al. \cite[Definition 3.1]{llm}.}
\end{remark}

For any fixed $\xi\in \mathbb{R},$ let $\beta >0$ be a fixed constant such that
\[
\beta \phi_i(\xi) +f_i^c (\Phi_{\xi})
\]
is monotone increasing in $\phi_i(\xi), i\in I, \Phi=(\phi_1,\phi_2, \cdots, \phi_n)\in X_{[0, \overline{E}]}.$ From (H3), $\beta$ is well defined.

Define
constants
\[
\nu _{i1}(c)=\frac{c-\sqrt{c^{2}+4\beta d_{i}}}{2d_{i}},
\,\,\nu _{i2}(c)=\frac{c+%
\sqrt{c^{2}+4\beta d_{i}}}{2d_{i}},i\in I.
\]%
For the sake of simplicity, we  denote $\nu _{i1}=\nu
_{i1}(c),\nu _{i2}=\nu _{i2}(c)$ without confusion. Then $\nu
_{i1}<0<\nu _{i2}$ and
\[
d_i\nu _{ij}^{2}-c\nu _{ij}-\beta =0,\,\, i\in I,j=1,2.
\]%
For $\Phi =(\phi _{1},\phi _{2},\cdots ,\phi _{n})\in X_{[0, \overline{E}]},$ define $%
F=(F_{1},F_{2},\cdots ,F_{n}):X\rightarrow X$ by
\begin{equation}
F_{i}(\Phi )(\xi )=\frac{1}{d_{i}(\nu _{i2}-\nu _{i1})}\left[
\int_{-\infty
}^{\xi }e^{\nu _{i1}(\xi -s)}+\int_{\xi }^{+\infty }
e^{\nu _{i2}(\xi -s)}%
\right] L_{i}(\Phi )(s)ds,  \label{5.3}
\end{equation}%
herein $L(\Phi )(s)=(L_{1}(\Phi )(s),L_{2}(\Phi )(s),\cdots
,L_{n}(\Phi )(s)) $ is formulated by
\[
L_{i}(\Phi )(\xi )=\beta \phi _{i}(\xi )+f_i^c(\Phi_{\xi}) ,i\in I.
\]
Now, to prove the existence of \eqref{fu}, it is sufficient to seek after a fixed point of $F$ (see Wu and Zou \cite{wuzou2}).

Let $\sigma <\min_{i\in I}\{- \nu _{i1}\}$ be a positive constant and $|\cdot|$ denote the supremum norm
in $\mathbb{R}^n.$ Define
\[
B_{\sigma }\left( \mathbb{R},\mathbb{R}^{n}\right) =\left\{ \Phi(x) \in
X:\sup_{x \in \mathbb{R}}\left\vert \Phi (x)\right\vert
e^{-\sigma \left\vert x \right\vert }<\infty \right\}
\]%
and
\[
\left\vert \Phi \right\vert _{\sigma }=\sup_{x \in
\mathbb{R}}\left\vert \Phi (x )\right\vert e^{-\sigma \left\vert
x \right\vert }.
\]%
Then it is easy to check that $ B_{\sigma }\left( \mathbb{R},\mathbb{R}%
^{n}\right) $ is a
Banach space with the decay norm $\left\vert \cdot \right\vert _{\sigma }$.

Before giving our main conclusion of this section, we first present some calculations. If $\phi (s)$ is twice differentiable, then
\begin{eqnarray}
&&\frac{d}{d s}\left[ e^{-\nu _{ij}s}\left( (c-d_{i}\nu _{ij})\phi
(s)-d_{i}\phi ^{\prime }(s)\right) \right]   \nonumber \\
&=&e^{-\nu _{ij}s}\left[ -\nu _{ij}(c-d_{i}\nu _{ij})\phi (s)+d_{i}\nu
_{ij}\phi ^{\prime }(s)\right]   \nonumber \\
&&+e^{-\nu _{ij}s}\left( (c-d_{i}\nu _{ij})\phi ^{\prime }(s)-d_{i}\phi
^{\prime \prime }(s)\right)   \nonumber \\
&=&e^{-\nu _{ij}s}\left[ -d_{i}\phi ^{\prime \prime }(s)+c\phi ^{\prime
}(s)+\beta \phi (s)\right]   \label{aux-1}
\end{eqnarray}%
because of $d_{i}\nu _{ij}^{2}-c\nu _{ij}-\beta =0,i\in I,j=1,2.$

If a bounded function $\phi (s)$ admits continuous and bounded derivatives
$\phi ^{\prime }(s)$ and $\phi ^{\prime \prime }(s)$\ for $s\in (a,b)$ with $a<b,$ then
\begin{eqnarray}
&&\int_{a}^{b}e^{-\nu _{ij}s}\left[ -d_{i}\phi ^{\prime \prime }(s)+c\phi
^{\prime }(s)+\beta \phi (s)\right] d s  \nonumber \\
&=&e^{-\nu _{ij}b}\left( (c-d_{i}\nu _{ij})\phi (b-)-d_{i}\phi ^{\prime
}(b-)\right)   \nonumber \\
&&-e^{-\nu _{ij}a}\left( (c-d_{i}\nu _{ij})\phi (a+)-d_{i}\phi ^{\prime
}(a+)\right) .  \label{aux-2}
\end{eqnarray}%
Moreover, if $\xi \in (a,b),$ then
\begin{eqnarray}
&&\frac{1}{d_{i}(\nu _{i2}-\nu _{i1})}\left[ \int_{a}^{\xi }e^{\nu _{i1}(\xi
-s)}+\int_{\xi }^{b}e^{\nu _{i2}(\xi -s)}\right] (\beta \phi _{i}(s)+c\phi
_{i}^{\prime }(s)-d_{i}\phi _{i}^{\prime \prime }(s))d s  \nonumber \\
&=&\left. \frac{1}{d_{i}(\nu _{i2}-\nu _{i1})}\left[ e^{\nu _{i1}(\xi
-s)}\left( (c-d_{i}\nu _{i1})\phi (s)-d_{i}\phi ^{\prime }(s)\right) \right]
\right\vert _{a}^{\xi }  \nonumber \\
&&+\left. \frac{1}{d_{i}(\nu _{i2}-\nu _{i1})}\left[ e^{\nu _{i2}(\xi
-s)}\left( (c-d_{i}\nu _{i2})\phi (s)-d_{i}\phi ^{\prime }(s)\right) \right]
\right\vert _{\xi }^{b}  \nonumber \\
&=&\underline{\phi }_{i}(\xi )+\frac{e^{-\nu _{i2}b}\left( (c-d_{i}\nu
_{i2})\phi (b-)-d_{i}\phi ^{\prime }(b-)\right) }{d_{i}(\nu _{i2}-\nu _{i1})}
\nonumber \\
&&-\frac{e^{-\nu _{i1}a}\left( (c-d_{i}\nu _{i1})\phi (a+)-d_{i}\phi
^{\prime }(a+)\right) }{d_{i}(\nu _{i2}-\nu _{i1})}.  \label{aux-3}
\end{eqnarray}

Now we state and prove the main result of this section.

\begin{theorem}\label{th5.1}
Assume that $ \overline{\Phi}=(\overline{\phi}_1, \overline{\phi}_2, \cdots, \overline{\phi}_n)\in X_{[0, \overline{E}]}$ and
$\underline{\Phi}=(\underline{\phi}_1, \underline{\phi}_2, \cdots, \underline{\phi}_n)\in X_{[0, \overline{E}]}$ are a pair of generalized upper and lower solutions of \eqref{fu} such that
\[
\overline{\Phi}(\xi)\ge \overline{\Phi}(\xi),\xi\in \mathbb{R}
\]
and
\[
\overline{\phi}'_i(\xi+)\le \overline{\phi}'_i(\xi-), \underline{\phi}'_i(\xi+)\ge \underline{\phi}'_i(\xi-), \xi \in \mathbb{T}, i\in I.
\]
Then \eqref{fu} has a solution $\Phi$ such that $\underline{\Phi}\le {\Phi}\le \overline{\Phi}.$
\end{theorem}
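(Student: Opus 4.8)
The plan is to obtain solutions of \eqref{fu} as fixed points of the operator $F$ from \eqref{5.3} and to invoke Schauder's fixed point theorem on the set
\[
\Gamma=\left\{\Phi\in X:\ \underline{\Phi}(\xi)\le\Phi(\xi)\le\overline{\Phi}(\xi),\ \xi\in\mathbb{R}\right\},
\]
viewed as a subset of the Banach space $B_{\sigma}(\mathbb{R},\mathbb{R}^{n})$ with the decay norm $|\cdot|_{\sigma}$. This set is nonempty (it contains $\underline{\Phi}$), convex, bounded, and closed in $B_{\sigma}$, since $|\cdot|_{\sigma}$-convergence forces uniform convergence on compact sets. Moreover, by the identities \eqref{aux-1}--\eqref{aux-3}, any fixed point $\Phi=F(\Phi)$ is automatically of class $C^{2}(\mathbb{R},\mathbb{R}^{n})$ and satisfies $d_{i}\phi_{i}''-c\phi_{i}'-\beta\phi_{i}+L_{i}(\Phi)=0$, that is, \eqref{fu}. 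Hence it suffices to check that $F|_{\Gamma}$ is a continuous compact self-map of $\Gamma$.

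The first and main step is to verify $F(\Gamma)\subseteq\Gamma$. For $\Phi\in\Gamma$ and $i\in I$, the choice of $\beta$ makes $L_{i}$ nondecreasing in its $i$-th argument over $X_{[0,\overline{E}]}$, so the positivity of the kernel in \eqref{5.3} gives $F_{i}(\underline{\Phi})(\xi)\le F_{i}(\Phi)(\xi)\le F_{i}(\overline{\Phi})(\xi)$, where on the two outer sides one evaluates $L_{i}$ along the admissible profiles $\widehat{\Phi}_{\xi}$ and $\widetilde{\Phi}_{\xi}$ provided by Definition \ref{ge}. It then remains to prove $F_{i}(\overline{\Phi})(\xi)\le\overline{\phi}_{i}(\xi)$ and $F_{i}(\underline{\Phi})(\xi)\ge\underline{\phi}_{i}(\xi)$. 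Here the hypotheses on $\mathbb{T}$ enter: one splits $(-\infty,\xi)$ and $(\xi,+\infty)$ at the finitely many points of $\mathbb{T}$, applies \eqref{aux-3} on each subinterval where $\overline{\Phi}$ (resp. $\underline{\Phi}$) is $C^{2}$, and sums. The interior endpoint contributions collect into terms proportional to $\overline{\phi}_{i}'(\eta-)-\overline{\phi}_{i}'(\eta+)\ge 0$ (resp. $\underline{\phi}_{i}'(\eta+)-\underline{\phi}_{i}'(\eta-)\ge 0$) for $\eta\in\mathbb{T}$; the contributions at $\pm\infty$ vanish because the upper and lower solutions and their derivatives are bounded while $\nu_{i1}<0<\nu_{i2}$; and the differential inequalities \eqref{geu}--\eqref{gel} make the remaining integrand have the right sign. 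I expect this telescoping bookkeeping across $\mathbb{T}$ to be the delicate point: the one-sided derivative inequalities in the statement are exactly what is needed to give the accumulated boundary terms the correct sign, and everything else in the step is a direct computation.

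Next I would establish continuity of $F:\Gamma\to\Gamma$ in the norm $|\cdot|_{\sigma}$. Using (H3) and $\Gamma\subset X_{[0,\overline{E}]}$, one has $|L_{i}(\Phi)(s)-L_{i}(\Psi)(s)|\le C\,|\Phi-\Psi|_{\sigma}\,e^{\sigma(|s|+c\tau)}$ for a constant $C$ independent of $\Phi,\Psi\in\Gamma$; inserting this into \eqref{5.3} and performing the standard convolution estimate against the exponential kernels, which converges precisely because $\sigma<\min_{i\in I}\{-\nu_{i1}\}$, yields $|F(\Phi)-F(\Psi)|_{\sigma}\le C'\,|\Phi-\Psi|_{\sigma}$ with $C'$ independent of $\Phi,\Psi$.

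Finally, for compactness, differentiating \eqref{5.3} shows that $F_{i}(\Phi)'$ and $F_{i}(\Phi)''$ are bounded on $\mathbb{R}$ uniformly in $\Phi\in\Gamma$, with bounds depending only on $\overline{E}$, $\beta$, $d_{i}$ and $c$; hence $F(\Gamma)$ is pointwise bounded and equicontinuous, so by the Arzela--Ascoli theorem together with a diagonal argument every sequence in $F(\Gamma)$ has a subsequence converging uniformly on compact subsets of $\mathbb{R}$. The weight $e^{-\sigma|x|}$ then upgrades this to convergence in $|\cdot|_{\sigma}$, since the tails are uniformly small, so $F(\Gamma)$ is precompact in $B_{\sigma}$. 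Schauder's fixed point theorem now applies and produces $\Phi\in\Gamma$ with $F(\Phi)=\Phi$; by the discussion above this $\Phi$ solves \eqref{fu} and satisfies $\underline{\Phi}\le\Phi\le\overline{\Phi}$, as claimed.
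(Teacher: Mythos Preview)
Your proposal is correct and follows essentially the same route as the paper: define $\Gamma$, verify $F(\Gamma)\subset\Gamma$ via the monotonicity of $\beta\phi_i(\xi)+f_i^c(\Phi_\xi)$ in $\phi_i(\xi)$ together with the integration-by-parts identities \eqref{aux-1}--\eqref{aux-3} and the one-sided derivative conditions at $\mathbb{T}$, then appeal to complete continuity of $F$ in the weighted norm and Schauder's theorem. The only difference is cosmetic: the paper delegates the continuity/compactness verification to references \cite{huangzou2,llr,ma01}, whereas you sketch these steps explicitly; and your shorthand ``$F_i(\underline\Phi)\le F_i(\Phi)\le F_i(\overline\Phi)$'' is a slight abuse (since the outer expressions really involve $L_i$ evaluated along the $\Phi$-dependent profiles $\widehat\Phi_\xi,\widetilde\Phi_\xi$), but you immediately clarify this and the underlying argument matches the paper's inequality \eqref{ull}.
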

\begin{proof}
Let
\[
\Gamma=\{\Phi(\xi)\in X: \underline{\Phi}(\xi)\le \Phi(\xi)\le
\overline{\Phi}(\xi)\}.
\]
It is clear that $\Gamma $ is nonempty and convex. Moreover, it is
closed and bounded with respect to the decay norm
$|\cdot|_{\sigma}.$
Choose $\Phi=(\phi_1,\phi_2, \cdots, \phi_n) \in \Gamma,$ then for each fixed $\xi\in\mathbb{R}$, the definition of $\beta$ implies that
\begin{equation}\label{ull}
\beta \underline{\phi}_i(\xi)+f_i^c(\widehat{\Phi}_{\xi})\le \beta \phi_i(\xi)+f_i^c (\Phi_{\xi}) \le \beta \overline{\phi}_i(\xi)+f_i^c(\widetilde{\Phi}_{\xi})
\end{equation}
with $\widetilde{\Phi}(\xi+cs)=(\widetilde{\phi}_1(x+cs), \widetilde{\phi}_2(x+cs), \cdots, \widetilde{\phi}_n(x+cs))$ satisfying
\[
\begin{cases}
\widetilde{\phi}_j(\xi+cs)=\phi_j(\xi+cs), j\neq i ,
 s\in [-\tau, 0],\\ \widetilde{\phi}_i(\xi+cs)=\phi_i(\xi +cs), s\in [-\tau, 0),
  \widetilde{\phi}_i(\xi)=\overline{\phi}_i(\xi)
\end{cases}
\]
and  $\widehat{\Phi}(\xi+cs)=(\widehat{\phi}_1(x+cs), \widehat{\phi}_2(x+cs), \cdots, \widehat{\phi}_n(x+cs))$ satisfying
\[
\begin{cases}
\widehat{\phi}_j(\xi+cs)=\phi_j(\xi+cs), j\neq i ,
 s\in [-\tau, 0], \\
 \widehat{\phi}_i(\xi+cs)=\phi_i(\xi +cs), s\in [-\tau, 0), \widehat{\phi}_i(\xi)=\underline{\phi}_i(\xi).
\end{cases}
\]

By \eqref{5.3}-\eqref{ull}, we obtain
\begin{eqnarray}
&&F_{i}(\Phi )(\xi )  \nonumber \\
&\geq &\frac{1}{d_{i}(\nu _{i2}-\nu _{i1})}\left[ \int_{-\infty }^{\xi
}e^{\nu _{i1}(\xi -s)}+\int_{\xi }^{+\infty }e^{\nu _{i2}(\xi -s)}\right]
(\beta \underline{\phi }_{i}(s)+c\underline{\phi }_{i}^{\prime }(s)-d_{i}%
\underline{\phi }_{i}^{\prime \prime }(s))ds  \nonumber \\
&=&\underline{\phi }_{i}(\xi )+\sum_{T_{j}\in \mathbb{T}}\frac{\min \left\{ e^{\nu _{i1}(\xi
-T_{j})},e^{\nu _{i2}(\xi -T_{j})}\right\} }{\nu _{i2}-\nu _{i1}}\left[
\underline{\phi }_{i}^{\prime }(T_{j}+)-\underline{\phi }_{i}^{\prime
}(T_{j}-)\right]   \nonumber \\
&\geq &\underline{\phi }_{i}(\xi ), \xi \in \mathbb{R} \setminus \mathbb{T}.  \label{any}
\end{eqnarray}
Using the continuity of $F_{i}(\Phi )(\xi ), \underline{\phi }_{i}(\xi ),$ we obtain
\[
F_{i}(\Phi )(\xi )\ge  \underline{\phi }_{i}(\xi ), \xi\in \mathbb{R}.
\]

In a similar way, we have
\[
F_{i}(\Phi )(\xi ) \le \overline{\phi}_i(\xi), i\in I, \xi\in \mathbb{R},
\]
and
\[
F: \Gamma \to \Gamma.
\]

Moreover, similar to those in Huang and Zou \cite{huangzou2}, Li et al. \cite[Lemma 3.6]{llr} and Ma \cite{ma01}, $F: \Gamma \to \Gamma$ is completely continuous in the sense of the decay norm $|\cdot|_{\sigma}$. Using Schauder's fixed point theorem, we complete the proof.
\end{proof}
\begin{remark}\label{stri}
{\rm
Let $\Phi$ be a solution given by Theorem \ref{th5.1}.  From \eqref{any}, we obtain
\[
\phi_i(\xi)>\underline{\phi }_{i}(\xi ),\xi\in\mathbb{R}, i\in I
\]
if one of the following statements is true: 1) for each $i\in I,$ $\underline{\phi}'_i(\xi+)> \underline{\phi}'_i(\xi-)$ for some $\xi\in \mathbb{T}$; 2) for each $i\in I,$ \eqref{gel} is strict on an nonempty interval. Similarly, we have
\[
\phi_i(\xi)<\overline{\phi }_{i}(\xi ),\xi\in\mathbb{R}, i\in I
 \]
if one of the following statements is true:  1) for each $i\in I,$ $\overline{\phi}'_i(\xi+)< \overline{\phi}'_i(\xi-)$ for some $\xi\in \mathbb{T}$; 2) for each $i\in I,$ \eqref{geu} is strict on an nonempty interval. }
\end{remark}
\begin{remark}
{\rm
Let $\Phi$ be a solution given by Theorem \ref{th5.1}. Since $F(\Phi)(\xi)=\Phi (\xi),$ then $\Phi '(\xi)$ and $\Phi ''(\xi)$ are uniformly bounded by the bounds of $\Phi(\xi)$.}
\end{remark}

\section{Traveling Wave Solutions of the Lotka-Volterra System \eqref{1}}
\noindent

In this section, we study the existence and nonexistence of traveling wave solutions of \eqref{1} with $\Omega =\mathbb{R}, a_i>0,i\in I$. We first introduce some notations.

Let $u(x,t)=\Psi (x+ct)=(\psi_1(x+ct),\psi_2(x+ct),\cdots,
\psi_n(x+ct) )$ be a traveling wave solution of \eqref{1}. Then
$\Psi(\xi)$ satisfies the following functional differential system
\begin{equation}
d_{i}\psi_i ^{\prime \prime }(\xi )-c\psi _{i}^{\prime }(\xi
)+r_{i}\psi
_{i}(\xi )\left[ 1-\sum_{j=1}^{n}c_{ij}\int_{-\tau }^{0}
\psi _{j}(\xi +cs)d%
\overline{\eta }_{ij}(s)\right]=0 ,i\in I,\xi\in\mathbb{R}.  \label{5.0}
\end{equation}
Similar to those in Li et al. \cite{llr}, Lin et al. \cite{llm}, Martin and Smith \cite{martin2}, we  consider the
invasion waves of all competitors which satisfy the following asymptotic boundary conditions
\begin{equation}
\lim_{\xi \rightarrow -\infty }\psi _{i}(\xi )=0,\,\,\lim_{\xi
\rightarrow \infty }\psi _{i}(\xi )=u_{i}^{\ast }.  \label{5.1}
\end{equation}

\begin{remark}{\rm
From the viewpoint of population dynamics, \eqref{5.0}-\eqref{5.1} formulate the synchronous invasion of all competitors, we refer to Shigesada and Kawasaki \cite[Chapter 7]{shi} for  the historical records of the expansion of the geographic range of several plants in North American after the last ice age (16,000 years ago).}
\end{remark}

\subsection{Existence of Traveling Wave Solutions}
\noindent

To construct upper and lower solutions, we define some constants. For any fixed $c> \max_{i\in I}\{2\sqrt{d_ir_i}\},$ define constants $\gamma
_{i1}$ and $\gamma _{i2}$ such that $0<\gamma _{i1}<\gamma _{i2}$ and
\begin{equation}
d_i\gamma _{i1}^2-c\gamma _{i1}+r_i=d_i\gamma _{i2}^2-c\gamma _{i2}+r_i=0%
\text{ for }i\in I.\label{gam}
\end{equation}
Assume that  $q>1$ holds and  $\eta$ satisfies
\begin{equation}
\eta \in \left( 1,\min_{ i,j\in I}\left\{ \frac{\gamma _{i2}}{\gamma
_{i1}},\frac{\gamma _{i1}+\gamma _{j1}}{\gamma _{i1}}\right\}
\right) . \label{eta}
\end{equation}
Define continuous functions $\underline{\psi}_i(\xi)$ and
$\overline{\psi}_i(\xi)$ as follows
\[
\underline{\psi}_i(\xi)=\max\{e ^{\gamma _{i1}\xi}-qe ^{\eta\gamma
_{i1}\xi}, 0\},\,\, \overline{\psi}_i(\xi)=\min\{e ^{\gamma _{i1}\xi},
(a_ic_{ii})^{-1}\}, i\in I.
\]

\begin{lemma}\label{le5.1}
If $q>1$ is large, then $\underline{\psi}_i(\xi)$ and
$\overline{\psi}_i(\xi)$  are generalized upper and lower solutions of \eqref{5.0}.
\end{lemma}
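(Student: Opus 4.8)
The plan is to verify Definition~\ref{ge} directly: check that $\overline{\psi}_i$ and $\underline{\psi}_i$ satisfy the differential inequalities \eqref{geu} and \eqref{gel} off a finite set $\mathbb{T}$, and that the required ordering $\underline{\psi}_i \le \overline{\psi}_i$ holds. First I would record the structure of these functions. Each $\overline{\psi}_i$ is $C^1$ except at the single point $\xi_i := \gamma_{i1}^{-1}\ln(a_ic_{ii})^{-1}$ where it transitions from the exponential $e^{\gamma_{i1}\xi}$ to the constant $(a_ic_{ii})^{-1}$; at that corner $\overline{\psi}_i'(\xi_i+)=0 < \gamma_{i1}(a_ic_{ii})^{-1} = \overline{\psi}_i'(\xi_i-)$, so the concavity-type jump condition of Theorem~\ref{th5.1} is satisfied. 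Likewise $\underline{\psi}_i$ is $C^1$ except at the point $\zeta_i$ where $e^{\gamma_{i1}\xi}=qe^{\eta\gamma_{i1}\xi}$, i.e. where $\underline{\psi}_i$ detaches from $0$; there $\underline{\psi}_i'(\zeta_i+) = \gamma_{i1}(1-\eta q\,e^{(\eta-1)\gamma_{i1}\zeta_i})$ which, since $q e^{(\eta-1)\gamma_{i1}\zeta_i}=1$, equals $\gamma_{i1}(1-\eta) > 0 = \underline{\psi}_i'(\zeta_i-)$, giving the correct inequality. Take $\mathbb{T}=\{\xi_1,\dots,\xi_n,\zeta_1,\dots,\zeta_n\}$.

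Next I would check the ordering $0 \le \underline{\psi}_i(\xi)\le \overline{\psi}_i(\xi)\le (a_ic_{ii})^{-1}$. Where $\underline{\psi}_i=0$ this is trivial; where $\underline{\psi}_i>0$ we have $\underline{\psi}_i(\xi) = e^{\gamma_{i1}\xi}-qe^{\eta\gamma_{i1}\xi} < e^{\gamma_{i1}\xi}$, and $\underline{\psi}_i(\xi) < (a_ic_{ii})^{-1}$ follows because $\underline{\psi}_i \le \overline{\psi}_i \le (a_ic_{ii})^{-1}$ once the first inequality is known; so $\underline{\psi}_i \le \overline{\psi}_i$ everywhere. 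Then for the differential inequalities I would split on the regions. For $\overline{\psi}_i$: on the region where $\overline{\psi}_i(\xi)=(a_ic_{ii})^{-1}$ (a constant), $d_i\overline{\psi}_i''-c\overline{\psi}_i' = 0$ and the reaction term is $r_i(a_ic_{ii})^{-1}[1-\sum_j c_{ij}\int \widetilde{\psi}_j d\overline\eta_{ij}] \le r_i(a_ic_{ii})^{-1}[1-c_{ii}\int\widetilde{\psi}_i d\eta_{ii} - c_{ii}a_i\widetilde{\psi}_i(\xi)]$; using $\widetilde{\psi}_i(\xi)=\overline{\psi}_i(\xi)=(a_ic_{ii})^{-1}$ (the generalized upper-solution prescription) this bracket is $\le 0$, dropping the nonnegative terms $c_{ij}\int\widetilde{\psi}_j$ and $c_{ii}\int_{[-\tau,0)}\widetilde{\psi}_i$. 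On the region where $\overline{\psi}_i(\xi)=e^{\gamma_{i1}\xi}$, $d_i\overline{\psi}_i''-c\overline{\psi}_i' = -r_i e^{\gamma_{i1}\xi}$ by \eqref{gam}, while the reaction term is $r_i e^{\gamma_{i1}\xi}[1-\cdots] \le r_i e^{\gamma_{i1}\xi}$, so the sum is $\le 0$. For $\underline{\psi}_i$: on the region $\underline{\psi}_i=0$ the inequality \eqref{gel} reads $0 \ge 0$. On the region $\underline{\psi}_i(\xi)= e^{\gamma_{i1}\xi}-qe^{\eta\gamma_{i1}\xi}>0$, using \eqref{gam} for the first term and $d_i(\eta\gamma_{i1})^2 - c\,\eta\gamma_{i1}+r_i < 0$ for the second (this is where \eqref{eta} enters: $1<\eta<\gamma_{i2}/\gamma_{i1}$ forces $\eta\gamma_{i1}\in(\gamma_{i1},\gamma_{i2})$, so the quadratic $d_i\lambda^2-c\lambda+r_i$ is negative there), one gets $d_i\underline{\psi}_i''-c\underline{\psi}_i' = -r_i e^{\gamma_{i1}\xi} - q(d_i(\eta\gamma_{i1})^2-c\eta\gamma_{i1})e^{\eta\gamma_{i1}\xi}$. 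Adding the reaction term $r_i\underline{\psi}_i(\xi)[1-\sum_j c_{ij}\int\widehat{\psi}_j d\overline\eta_{ij}]$ and using $\widehat{\psi}_j \le \overline{\psi}_j \le e^{\gamma_{j1}\xi}$ together with $\int\widehat\psi_i\,d\eta_{ii}+a_i\widehat\psi_i(\xi) \le \int\widehat\psi_i\,d\overline\eta_{ii}$ bounded by an exponential, I would estimate $\sum_j c_{ij}\int\widehat\psi_j d\overline\eta_{ij} \le C e^{\min_j\gamma_{j1}\,\xi}\cdot(\text{const})$ and show that, for $q$ large, the negative term $-q|d_i(\eta\gamma_{i1})^2-c\eta\gamma_{i1}|e^{\eta\gamma_{i1}\xi}$ dominates $r_i e^{\gamma_{i1}\xi}\cdot C e^{(\min_j\gamma_{j1})\xi}$; the exponent comparison $\eta\gamma_{i1} \le \gamma_{i1}+\min_j\gamma_{j1}$ is exactly guaranteed by the second bound in \eqref{eta}, so on $[\zeta_i,\infty)$ (where $e^{(\eta-1)\gamma_{i1}\xi}\le q^{-1}$, i.e. $\xi$ bounded above appropriately) the required inequality $\ge 0$ holds once $q$ is chosen large enough uniformly in $i$.

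The main obstacle is the lower-solution estimate for $\underline{\psi}_i$: one must carefully bound the coupling integrals $\int_{-\tau}^{0}\widehat\psi_j(\xi+cs)\,d\overline\eta_{ij}(s)$ from above by a constant multiple of $e^{\min_j\gamma_{j1}\,\xi}$ (the constant depending on $c\tau$ through $e^{-\gamma_{j1}c\tau}\le e^{c\tau\cdot 0}$-type factors), and then play the two inequalities in \eqref{eta} against each other to see that the term $-qe^{\eta\gamma_{i1}\xi}$ — which is negative in $d_i\lambda^2-c\lambda+r_i$ evaluated at $\eta\gamma_{i1}$ — is the dominant one for large $q$, uniformly for $\xi$ in the relevant half-line. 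Everything else (the corner jump signs, the trivial regions, the upper-solution inequalities, which do not even require $q$ large) is routine. I would also note that the fact that $a_i>0$ is used crucially: it is what makes the instantaneous term $c_{ii}a_i\overline\psi_i(\xi)$ available to absorb the reaction term on the flat part of $\overline\psi_i$, so that $(a_ic_{ii})^{-1}$ is a valid upper plateau — this is precisely the role of the instantaneous self-limitation effect highlighted in the introduction.
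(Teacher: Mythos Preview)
Your plan matches the paper's proof: reduce to the extremal differential inequalities via monotonicity of the reaction term, split $\overline\psi_i$ and $\underline\psi_i$ into their two pieces, and on the nontrivial piece of $\underline\psi_i$ use $d_i(\eta\gamma_{i1})^2-c\eta\gamma_{i1}+r_i<0$ together with $\eta\gamma_{i1}<\gamma_{i1}+\gamma_{j1}$ from \eqref{eta} (on the region $\xi<0$, where $e^{\eta\gamma_{i1}\xi}>e^{(\gamma_{i1}+\gamma_{j1})\xi}$) to absorb the coupling terms into the positive quantity $-q[d_i\eta^2\gamma_{i1}^2-c\eta\gamma_{i1}+r_i]e^{\eta\gamma_{i1}\xi}$ for $q$ large. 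One slip to fix: the positive region of $\underline\psi_i$ is $(-\infty,\zeta_i)$, not $[\zeta_i,\infty)$, and $\gamma_{i1}(1-\eta)<0$ since $\eta>1$; with the correct orientation the jump inequality reads $\underline\psi_i'(\zeta_i+)=0>\gamma_{i1}e^{\gamma_{i1}\zeta_i}(1-\eta)=\underline\psi_i'(\zeta_i-)$, which is the right sign (and these jump conditions belong to the hypotheses of Theorem~\ref{th5.1}, not to the present lemma).
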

\begin{proof}
By the monotonicity, it suffices to prove that%
\begin{equation}
d_{i}\overline{\psi }_{i}^{\prime \prime }(\xi )-c\overline{\psi }%
_{i}^{\prime }(\xi )+r_{i}\overline{\psi }_{i}(\xi )\left[ 1-c_{ii}a_{i}%
\overline{\psi }_{i}(\xi )-\sum_{j=1}^{n}c_{ij}\int_{-\tau }^{0}\underline{%
\psi }_{j}(\xi +cs)d\eta _{ij}(s)\right] \leq 0,i\in I,  \label{5u}
\end{equation}%
and%
\begin{equation}
d_{i}\underline{\psi }_{i}^{\prime \prime }(\xi )-c\underline{\psi }%
_{i}^{\prime }(\xi )+r_{i}\underline{\psi }_{i}(\xi )\left[ 1-c_{ii}a_{i}%
\underline{\psi }_{i}(\xi )-\sum_{j=1}^{n}c_{ij}\int_{-\tau }^{0}\overline{%
\psi }_{j}(\xi +cs)d\eta _{ij}(s)\right] \geq 0,i\in I  \label{5l}
\end{equation}%
if $\underline{\psi }_{i}(\xi )$ and $\overline{\psi }_{i}(\xi ),i\in I,$
are differentiable.

If $\overline{\psi }_{i}(\xi )=e^{\gamma _{i1}\xi }<(a_{i}c_{ii})^{-1},$ then%
\begin{eqnarray*}
&&d_{i}\overline{\psi }_{i}^{\prime \prime }(\xi )-c\overline{\psi }%
_{i}^{\prime }(\xi )+r_{i}\overline{\psi }_{i}(\xi )\left[ 1-c_{ii}a_{i}%
\overline{\psi }_{i}(\xi )-\sum_{j=1}^{n}c_{ij}\int_{-\tau }^{0}\underline{%
\psi }_{j}(\xi +cs)d\eta _{ij}(s)\right]  \\
&\leq &d_{i}\overline{\psi }_{i}^{\prime \prime }(\xi )-c\overline{\psi }%
_{i}^{\prime }(\xi )+r_{i}\overline{\psi }_{i}(\xi ) \\
&=&e^{\gamma _{i1}\xi }[d_{i}\gamma _{i1}^{2}-c\gamma _{i1}+r_{i}] \\
&=&0.
\end{eqnarray*}
If $\overline{\psi }_{i}(\xi )=(a_{i}c_{ii})^{-1}<e^{\gamma _{i1}\xi },$ then%
\begin{eqnarray*}
&&d_{i}\overline{\psi }_{i}^{\prime \prime }(\xi )-c\overline{\psi }%
_{i}^{\prime }(\xi )+r_{i}\overline{\psi }_{i}(\xi )\left[ 1-c_{ii}a_{i}%
\overline{\psi }_{i}(\xi )-\sum_{j=1}^{n}c_{ij}\int_{-\tau }^{0}\underline{%
\psi }_{j}(\xi +cs)d\eta _{ij}(s)\right]  \\
&\leq &d_{i}\overline{\psi }_{i}^{\prime \prime }(\xi )-c\overline{\psi }%
_{i}^{\prime }(\xi )+r_{i}\overline{\psi }_{i}(\xi )\left[ 1-c_{ii}a_{i}%
\overline{\psi }_{i}(\xi )\right]  \\
&=&0,
\end{eqnarray*}%
and this completes the proof of \eqref{5u}.

If $\underline{\psi }_{i}(\xi )=0>e^{\gamma _{i1}\xi }-qe^{\eta \gamma
_{i1}\xi },$ then%
\[
d_{i}\underline{\psi }_{i}^{\prime \prime }(\xi )-c\underline{\psi }%
_{i}^{\prime }(\xi )+r_{i}\underline{\psi }_{i}(\xi )\left[ 1-c_{ii}a_{i}%
\underline{\psi }_{i}(\xi )-\sum_{j=1}^{n}c_{ij}\int_{-\tau }^{0}\overline{%
\psi }_{j}(\xi +cs)d\eta _{ij}(s)\right] =0.
\]
If $\underline{\psi }_{i}(\xi )=e^{\gamma _{i1}\xi }-qe^{\eta \gamma
_{i1}\xi }>0,$ then%
\begin{eqnarray*}
&&d_{i}\underline{\psi }_{i}^{\prime \prime }(\xi )-c\underline{\psi }%
_{i}^{\prime }(\xi )+r_{i}\underline{\psi }_{i}(\xi )\left[ 1-c_{ii}a_{i}%
\underline{\psi }_{i}(\xi )-\sum_{j=1}^{n}c_{ij}\int_{-\tau }^{0}\overline{%
\psi }_{j}(\xi +cs)d\eta _{ij}(s)\right]  \\
&=&d_{i}\underline{\psi }_{i}^{\prime \prime }(\xi )-c\underline{\psi }%
_{i}^{\prime }(\xi )+r_{i}\underline{\psi }_{i}(\xi ) \\
&&-r_{i}c_{ii}a_{i}\underline{\psi }_{i}^{2}(\xi )-r_{i}\underline{\psi }%
_{i}(\xi )\sum_{j=1}^{n}c_{ij}\int_{-\tau }^{0}\overline{\psi }_{j}(\xi
+cs)d\eta _{ij}(s) \\
&=&-qe^{\eta \gamma _{i1}\xi }[d_{i}\eta ^{2}\gamma _{i1}^{2}-c\eta \gamma
_{i1}+r_{i}] \\
&&-r_{i}c_{ii}a_{i}\underline{\psi }_{i}^{2}(\xi )-r_{i}\underline{\psi }%
_{i}(\xi )\sum_{j=1}^{n}c_{ij}\int_{-\tau }^{0}\overline{\psi }_{j}(\xi
+cs)d\eta _{ij}(s),
\end{eqnarray*}%
and the monotonicity of $\overline{\psi }_{j}$ indicates that
\begin{eqnarray*}
&&-r_{i}c_{ii}a_{i}\underline{\psi }_{i}^{2}(\xi )-r_{i}\underline{\psi }%
_{i}(\xi )\sum_{j=1}^{n}c_{ij}\int_{-\tau }^{0}\overline{\psi }_{j}(\xi
+cs)d\eta _{ij}(s) \\
&\geq &-r_{i}c_{ii}a_{i}\underline{\psi }_{i}^{2}(\xi )-r_{i}\underline{\psi
}_{i}(\xi )\sum_{j=1}^{n}c_{ij}\overline{\psi }_{j}(\xi ) \\
&\geq &-r_{i}c_{ii}a_{i}e^{2\gamma _{i1}\xi
}-r_{i}\sum_{j=1}^{n}c_{ij}e^{(\gamma _{i1}+\gamma _{j1})\xi }.
\end{eqnarray*}%
Therefore, we only need to verify that%
\begin{equation}
-qe^{\eta \gamma _{i1}\xi }[d_{i}\eta ^{2}\gamma _{i1}^{2}-c\eta \gamma
_{i1}+r_{i}]-r_{i}c_{ii}a_{i}e^{2\gamma _{i1}\xi
}-r_{i}\sum_{j=1}^{n}c_{ij}e^{(\gamma _{i1}+\gamma _{j1})\xi }\geq 0.
\label{5lo}
\end{equation}%
Since $q>1,$ we have $\xi <0$ and%
\[
e^{\eta \gamma _{i1}\xi }>e^{2\gamma _{i1}\xi },e^{\eta \gamma _{i1}\xi
}>e^{(\gamma _{i1}+\gamma _{j1})\xi },
\]%
which imply that \eqref{5lo} is true if
\[
q>\frac{-r_{i}c_{ii}a_{i}-r_{i}\sum_{j=1}^{n}c_{ij}}{d_{i}\eta ^{2}\gamma
_{i1}^{2}-c\eta \gamma _{i1}+r_{i}}+1>1.
\]%
Let%
\[
q=\max_{i\in I}\left\{ \frac{-r_{i}c_{ii}a_{i}-r_{i}\sum_{j=1}^{n}c_{ij}}{%
d_{i}\eta ^{2}\gamma _{i1}^{2}-c\eta \gamma _{i1}+r_{i}}\right\} +2,
\]%
then \eqref{5l} is true and we complete the proof.
\end{proof}

From Lemma \ref{le5.1}, Theorem \ref{th5.1} and Remark \ref{stri}, we obtain the following
result.
\begin{theorem}\label{th5.0}
For each $c> \max_{i\in
I}\{2\sqrt{d_ir_i}\},$ \eqref{5.0} has a strictly positive solution
$\Psi=(\psi_1,\psi_2,\cdots,\psi_n)$ such that
\begin{equation}
\lim_{\xi
\rightarrow -\infty }\psi _{i}(\xi )e^{-\gamma _{i1}\xi
}=1,\,\, \underline{\psi}_i (\xi) <  \psi_i (\xi)<  \overline{\psi}_i (\xi), \xi \in \mathbb{R}, i\in I.  \label{5.}
\end{equation}
\end{theorem}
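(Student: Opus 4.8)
The plan is to apply Theorem~\ref{th5.1} with the pair of generalized upper and lower solutions $(\overline{\Psi},\underline{\Psi})$ furnished by Lemma~\ref{le5.1}, and then upgrade the resulting solution to a \emph{strictly} positive one with the prescribed asymptotic behavior at $-\infty$ by invoking Remark~\ref{stri}. First I would check that the hypotheses of Theorem~\ref{th5.1} are met: namely, that $\underline{\psi}_i(\xi)\le\overline{\psi}_i(\xi)$ for all $\xi\in\mathbb{R}$ and $i\in I$, and that at the (finitely many) corner points $\mathbb{T}$ where the functions $\max\{\cdot,0\}$ and $\min\{\cdot,(a_ic_{ii})^{-1}\}$ fail to be differentiable, the one-sided derivatives satisfy $\overline{\psi}_i'(\xi+)\le\overline{\psi}_i'(\xi-)$ and $\underline{\psi}_i'(\xi+)\ge\underline{\psi}_i'(\xi-)$. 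For $\overline{\psi}_i=\min\{e^{\gamma_{i1}\xi},(a_ic_{ii})^{-1}\}$ the kink is concave (the slope drops from $\gamma_{i1}e^{\gamma_{i1}\xi}>0$ to $0$), so the upper-solution corner condition holds; for $\underline{\psi}_i=\max\{e^{\gamma_{i1}\xi}-qe^{\eta\gamma_{i1}\xi},0\}$ the kink is convex at the unique zero $\xi_i^0$ (slope jumps from a negative value up to $0$), so the lower-solution corner condition holds. The ordering $\underline{\psi}_i\le\overline{\psi}_i$ follows because $e^{\gamma_{i1}\xi}-qe^{\eta\gamma_{i1}\xi}\le e^{\gamma_{i1}\xi}$ pointwise and the left-hand side is $\le 0$ wherever $e^{\gamma_{i1}\xi}\ge(a_ic_{ii})^{-1}$ (since that region is $\xi\ge$ some value, while $\underline{\psi}_i$ is supported on $\xi<\xi_i^0<0$, provided $q$ is large enough that $\xi_i^0$ lies to the left of the plateau threshold — which one arranges by enlarging $q$, consistent with Lemma~\ref{le5.1}).

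Granting these, Theorem~\ref{th5.1} produces a solution $\Psi=(\psi_1,\dots,\psi_n)$ of \eqref{5.0} with $\underline{\psi}_i\le\psi_i\le\overline{\psi}_i$ on $\mathbb{R}$. To get strict inequalities I would apply Remark~\ref{stri}: for the lower side, at each corner point $\xi_i^0$ we have $\underline{\psi}_i'(\xi_i^0+)=0>\underline{\psi}_i'(\xi_i^0-)$, so alternative~(1) in Remark~\ref{stri} applies and $\psi_i(\xi)>\underline{\psi}_i(\xi)$ for all $\xi$; for the upper side, at the plateau corner the slope strictly decreases, $\overline{\psi}_i'(\xi+)=0<\overline{\psi}_i'(\xi-)=\gamma_{i1}(a_ic_{ii})^{-1}$, so again alternative~(1) gives $\psi_i(\xi)<\overline{\psi}_i(\xi)$ for all $\xi$. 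In particular each $\psi_i$ is positive (it dominates a nonnegative function that is somewhere positive, hence by the strict inequality is everywhere positive), and since the strict inequalities hold for every $i$ we in fact get strict positivity in the sense $\Psi\gg 0$ as defined in Section~2.

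For the asymptotic normalization at $-\infty$, note that as $\xi\to-\infty$ both bounds behave like $e^{\gamma_{i1}\xi}$: precisely $\underline{\psi}_i(\xi)e^{-\gamma_{i1}\xi}=1-qe^{(\eta-1)\gamma_{i1}\xi}\to 1$ and $\overline{\psi}_i(\xi)e^{-\gamma_{i1}\xi}=1$ for $\xi$ sufficiently negative (where the $\min$ picks out $e^{\gamma_{i1}\xi}$), using $(\eta-1)\gamma_{i1}>0$ so that $e^{(\eta-1)\gamma_{i1}\xi}\to 0$. Squeezing $\psi_i$ between these two gives $\lim_{\xi\to-\infty}\psi_i(\xi)e^{-\gamma_{i1}\xi}=1$, which is exactly \eqref{5.}.

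The only genuinely delicate point is the verification that Lemma~\ref{le5.1}'s $\underline{\psi}_i,\overline{\psi}_i$ really do satisfy Definition~\ref{ge} in the \emph{generalized} sense — i.e., that the cross-terms appearing in \eqref{geu}–\eqref{gel} (where $\widetilde{\phi}_i,\widehat{\phi}_i$ may differ from $\overline{\phi}_i,\underline{\phi}_i$ at the single argument $\xi$) are controlled — but this is precisely what Lemma~\ref{le5.1} asserts via the reduction to \eqref{5u}–\eqref{5l}, so I would simply cite it. Everything else is the routine squeeze argument above. Thus the main obstacle is bookkeeping (matching the corner-point sign conditions of Theorem~\ref{th5.1} and Remark~\ref{stri} to the explicit shapes of the barriers), not analysis, and the theorem follows.
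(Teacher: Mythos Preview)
Your proposal is correct and follows exactly the route the paper takes: the paper's proof of Theorem~\ref{th5.0} consists of the single sentence ``From Lemma~\ref{le5.1}, Theorem~\ref{th5.1} and Remark~\ref{stri}, we obtain the following result,'' and you have faithfully unpacked precisely those three ingredients, including the corner-point checks needed for Theorem~\ref{th5.1} and Remark~\ref{stri} and the squeeze argument for the limit at $-\infty$.
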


\subsection{Asymptotic Behavior of Traveling Wave Solutions}
\noindent

The following is the main conclusion of this subsection.

\begin{theorem}\label{th5}
Assume that \eqref{2} holds. If $\Psi(\xi)$ is formulated by Theorem \ref{th5.0}, then  \eqref{5.1} is true.
\end{theorem}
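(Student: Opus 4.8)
The plan is to verify hypotheses (H1)--(H5) of Theorem \ref{t1-cr} for the functional differential system associated with \eqref{5.0}, and then invoke that theorem together with Theorem \ref{th5.0} to conclude $\lim_{\xi\to\infty}\Psi(\xi)=u^\ast$, while separately handling the limit at $-\infty$. For the behavior as $\xi\to-\infty$: by Theorem \ref{th5.0} we have $\underline{\psi}_i(\xi)<\psi_i(\xi)<\overline{\psi}_i(\xi)$ with $\overline{\psi}_i(\xi)=e^{\gamma_{i1}\xi}$ for $\xi$ negative and large in absolute value, so $0\le\psi_i(\xi)\le e^{\gamma_{i1}\xi}\to0$ as $\xi\to-\infty$, and $\Psi',\Psi''$ are uniformly bounded by the remarks following Theorem \ref{th5.1}; this gives the first half of \eqref{5.1} immediately.

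The substantive part is the limit at $+\infty$. First I would record that, since $u^\ast$ is the unique spatially homogeneous positive steady state of \eqref{1} (whose existence follows from \eqref{2}), the reaction term $f_i(u)=r_iu_i\bigl[1-\sum_j c_{ij}\int_{-\tau}^0 u_j\,d\overline{\eta}_{ij}\bigr]$ satisfies (H1) with $E=u^\ast$. For (H2) and (H4)--(H5), I would construct a one-parameter family of rectangles $\Sigma(y)=[\widehat{a}(y),\widehat{b}(y)]$ shrinking to $u^\ast$; the natural choice, following Martin and Smith \cite{martin2} and the contracting-rectangle philosophy, is to take $a_i(y)$ and $b_i(y)$ to be the coordinates obtained by solving, on the diagonal-type curve through $u^\ast$, the linear system that balances the self-limitation term $c_{ii}a_i$ against the cross terms $\sum_{j\ne i}c_{ij}$; concretely one expects something like $b_i(y)$ decreasing from an upper bound $b_i(0)\ge\overline{E}_i$ (with $\overline{E}_i$ larger than $(a_ic_{ii})^{-1}$, the sup of $\overline{\psi}_i$) down to $u_i^\ast$, and $a_i(y)$ increasing from $0$ up to $u_i^\ast$, chosen so that on the face $u_i(0)=b_i(y)$ one has $1-c_{ii}a_ib_i(y)-\sum_{j\ne i}c_{ij}a_j(y)<0$ and on the face $u_i(0)=a_i(y)$ one has $1-c_{ii}a_ib_i(y)-\sum_{j\ne i}c_{ij}b_j(y)>0$ — the strict inequalities of (H5). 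Verifying that such $a_i,b_i$ can be chosen continuous, ordered as in (H4), and strictly contracting is exactly where condition \eqref{2}, i.e. $\sum_j c_{ij}(c_{jj}a_j)^{-1}<2$, enters: it is the algebraic condition guaranteeing that the linear comparison map contracting toward $u^\ast$ has spectral radius less than one, so the rectangles genuinely shrink. (H3) is routine: on the bounded set $[\widehat{0},\widehat{b(0)}]$ the polynomial nonlinearity with a bounded Stieltjes average is Lipschitz in the supremum norm.

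Once (H1)--(H5) are in place, I would check the hypothesis \eqref{aa} of Theorem \ref{t1-cr}: from Theorem \ref{th5.0}, $\Psi$ is strictly positive and bounded above by $\min\{e^{\gamma_{i1}\xi},(a_ic_{ii})^{-1}\}\le\overline{E}_i< b_i(0)$, so $\limsup_{\xi\to\infty}\Psi(\xi)\le b(0)$; combined with strict positivity and an elementary argument that $\liminf_{\xi\to\infty}\psi_i(\xi)>0$ (using the lower solution and the fact that a nonnegative bounded entire sub/supersolution of a logistic-type equation that does not decay cannot oscillate down to zero — or, more directly, invoking Lemma \ref{le1.1}(iii)-type spreading for the dominating scalar equation), one places $\limsup$ and $\liminf$ strictly inside $(\underline{E},\overline{E})$. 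Then Theorem \ref{t1-cr} yields $\lim_{\xi\to\infty}\Psi(\xi)=u^\ast$, which is the second half of \eqref{5.1}. The main obstacle I anticipate is the explicit construction of the strict contracting rectangle: one must produce $a_i(y),b_i(y)$ with all the monotonicity and ordering requirements of (H4) simultaneously, and prove the strict sign conditions of (H5) hold throughout $y\in(0,1)$ — this is a linear-algebra/comparison argument powered by \eqref{2}, and getting the endpoint normalizations ($a(1)=b(1)=u^\ast$, $b(0)\ge\overline{E}$) to be mutually consistent is the delicate bookkeeping. The $\liminf>0$ lower bound at $+\infty$ is a secondary technical point, handled either by a sliding/comparison argument or by noting that the first component already dominates a scalar Fisher-type equation.
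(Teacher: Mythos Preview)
Your approach matches the paper's: first obtain $\liminf_{\xi\to\infty}\psi_i(\xi)>0$ by comparing $\psi_i(x+ct)$ with the Fisher equation $\partial_t u_i = d_i\Delta u_i + r_i u_i\bigl[2-\sum_j c_{ij}(c_{jj}a_j)^{-1}-a_ic_{ii}u_i\bigr]$ via Lemma \ref{le1.1} (the effective growth rate being positive precisely by \eqref{2}), then apply Theorem \ref{t1-cr} with the explicit linear rectangle $a_i(s)=su_i^*$, $b_i(s)=su_i^*+(1-s)\bigl[(c_{ii}a_i)^{-1}+\epsilon\bigr]$, for which the paper invokes Smith \cite[Lemma 7.4]{smith} rather than checking (H5) by hand. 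Your outline is correct; the paper simply supplies the concrete Fisher comparison and the concrete rectangle that you left as the two pieces of ``delicate bookkeeping.''
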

\begin{proof}
Note
that $\Psi(x+ct)$ is a special classical solution of the following initial
value problem
\begin{equation}\label{5.2}
\begin{cases}
\dfrac{\partial u_{i}(x,t)}{\partial t}=d_{i}\Delta
u_{i}(x,t)+r_{i}u_{i}(x,t)\left[ 1-\sum_{j=1}^{n}c_{ij}\int_{-\tau
}^{0}u_{j}(x,t+s)d\overline{\eta }_{ij}(s)\right] ,\\
u_{i}(x,s)=\psi _{i}(x+cs), \end{cases}
\end{equation}
where $x\in\mathbb{R}, t>0,s\in [-\tau, 0].$
The boundedness and smoothness of $\Psi(x+ct)$ imply that $\psi_i(x+ct)$ is an
upper solution to the following Fisher equation
\[
\dfrac{\partial u_{i}(x,t)}{\partial t}=d_{i}\Delta
u_{i}(x,t)+r_{i}u_{i}(x,t)\left[ 2- \sum_{j=1}^n c_{ij}
(c_{jj}a_j)^{-1}-a_ic_{ii}u_{i}(x,t)\right].
\]
Thus Lemma \ref{le1.1} asserts that
\[
\liminf_{\xi\to\infty}\psi_i(\xi)\ge 2- \sum_{j=1}^n c_{ij}
(c_{jj}a_j)^{-1}>0, i\in I.
\]

Denote
\[
\liminf_{\xi\to\infty}\psi_i(\xi)=\psi_i^-,\,\,\,
\limsup_{\xi\to\infty}\psi_i(\xi)=\psi_i^+,
\]
then there exists $s\in (0,1]$ such that
\[
a_i(s)\le \psi_i^- \le \psi_i^+ \le b_i(s)
\]
with
\[
a_i(s)=su_i^*, b_i(s)= su_i^*+(1-s)[(c_{ii}a_i)^{-1}+\epsilon ],\epsilon >0,
\]
\[
a(s)=(a_1(s),a_2(s),\cdots, a_n(s)), \,\, b(s)=(b_1(s),b_2(s),\cdots, b_n(s)).
\]
By Smith \cite[Lemma 7.4]{smith}, there exists a constant $\epsilon >0$ such that $[{a}(s), {b}(s)]$ defines a strictly contracting rectangle of the corresponding functional differential equations of \eqref{1}. Applying Theorem \ref{t1-cr}, we complete the proof.
\end{proof}

Furthermore, from the proof of Theorem \ref{th5}, we obtain
the following result.

\begin{theorem}\label{th8}
Assume that $\Psi(\xi)=(\psi_1(\xi),\psi_2(\xi),\cdots,\psi_n(\xi))$
is a strictly positive solution to \eqref{5.0} and satisfies
\[
0\le \psi_i(\xi)< (a_ic_{ii})^{-1}, i\in I, \xi\in\mathbb{R}.
\]
Then
 $\lim_{\xi\to \infty}\psi _i (\xi)=u_i^*$ if \eqref{2} holds.
\end{theorem}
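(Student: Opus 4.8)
The plan is to show that Theorem~\ref{th8} follows from essentially the same argument as Theorem~\ref{th5}, with the only additional work being to reconstruct the two inputs to that argument from the hypotheses stated here rather than from the explicit profile of Theorem~\ref{th5.0}. Those two inputs are: (a) a strictly positive lower bound on $\liminf_{\xi\to\infty}\psi_i(\xi)$, and (b) an upper bound that feeds the contracting rectangle of Smith \cite[Lemma 7.4]{smith}. Input (b) is handed to us for free, since the hypothesis $0\le\psi_i(\xi)<(a_ic_{ii})^{-1}$ is exactly the upper bound used in the one-parameter family $b_i(s)=su_i^*+(1-s)[(c_{ii}a_i)^{-1}+\epsilon]$. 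So the bulk of the plan is to recover input (a).

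First I would observe that $\Psi(x+ct)$ is a classical solution of the initial value problem \eqref{5.2} (with the strictly positive, bounded initial data $\psi_i(x+cs)$), exactly as in the proof of Theorem~\ref{th5}; boundedness and smoothness come from the hypothesis and from $\psi_i<(a_ic_{ii})^{-1}$. Then, using $\sum_{j}c_{ij}\int_{-\tau}^0\psi_j(x,t+s)\,d\overline{\eta}_{ij}(s)\le c_{ii}a_i u_i(x,t)+\sum_j c_{ij}(c_{jj}a_j)^{-1}$ — the first term from the instantaneous self-limitation part of $\eta_{ii}$, the rest from the crude bound $\psi_j<(a_jc_{jj})^{-1}$ — I get that $\psi_i(x+ct)$ is an upper solution of the scalar Fisher equation
\[
\frac{\partial u_i}{\partial t}=d_i\Delta u_i+r_iu_i\Big[2-\sum_{j=1}^n c_{ij}(c_{jj}a_j)^{-1}-a_ic_{ii}u_i\Big].
\]
By \eqref{2} the bracketed linear growth rate $2-\sum_j c_{ij}(c_{jj}a_j)^{-1}$ is positive, so Lemma~\ref{le1.1}(iii), applied with any admissible speed, yields $\liminf_{\xi\to\infty}\psi_i(\xi)\ge 2-\sum_j c_{ij}(c_{jj}a_j)^{-1}>0$, which is input (a). One subtlety to address: to invoke Lemma~\ref{le1.1}(iii) the initial datum of the Fisher comparison must be positive somewhere with the right localization; since $\Psi$ is \emph{strictly} positive this is immediate, and the spreading speed comparison is taken along $x=-ct+\text{const}$ (equivalently, one runs the argument exactly as in Theorem~\ref{th5}, which already carried it out without using the asymptotic boundary conditions, only strict positivity and the upper bound).

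With (a) and (b) in hand, the rest is verbatim the last paragraph of the proof of Theorem~\ref{th5}: set $\psi_i^\pm=\liminf/\limsup_{\xi\to\infty}\psi_i(\xi)$, note $0<\psi_i^-\le\psi_i^+<(a_ic_{ii})^{-1}$ so there is $s\in(0,1]$ with $a_i(s)\le\psi_i^-\le\psi_i^+\le b_i(s)$, invoke Smith \cite[Lemma 7.4]{smith} to get that $[a(s),b(s)]$ is a strictly contracting rectangle for the functional differential system associated with \eqref{1} (so that (H1)--(H5) hold with $E=u^*$), and apply Theorem~\ref{t1-cr} — whose remaining hypothesis, uniform boundedness of $\Psi'$ and $\Psi''$, follows from the fixed-point representation as noted in the remark after Theorem~\ref{th5.1} — to conclude $\lim_{\xi\to\infty}\psi_i(\xi)=u_i^*$. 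The only place where anything could go wrong is the passage from the pointwise bound on $\psi_j$ to the Fisher upper-solution inequality when the $c_{ij}$ with $i\ne j$ are present; the point is that these are handled crudely by $(c_{jj}a_j)^{-1}$, and condition \eqref{2} is precisely what keeps the resulting linear rate positive, so I expect no real obstacle — Theorem~\ref{th8} is really just the observation that Theorem~\ref{th5}'s proof never used the full structure of the profile in Theorem~\ref{th5.0}, only strict positivity together with the uniform upper bound $(a_ic_{ii})^{-1}$.
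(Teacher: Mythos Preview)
Your proposal is correct and is exactly the paper's approach: the paper's entire proof of Theorem~\ref{th8} is the one-line remark ``from the proof of Theorem~\ref{th5},'' and you have faithfully unpacked that, observing that the only inputs used there are strict positivity (for the Fisher lower barrier via Lemma~\ref{le1.1}) and the bound $\psi_i<(a_ic_{ii})^{-1}$ (for the contracting rectangle). One small slip: the spreading comparison should be read along fixed $x$ with $t\to\infty$ (so $\xi=x+ct\to\infty$), not along $x=-ct+\text{const}$, but your parenthetical deferral to the proof of Theorem~\ref{th5} already handles this correctly.
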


\subsection{Nonexistence of Traveling Wave Solutions}

\begin{theorem}\label{no}
Assume that $2\sqrt{d_{i_0}r_{i_0}}= \max_{i\in I}\{2\sqrt{d_ir_i}\}$ for some $i_0 \in I.$ If $c<2\sqrt{d_{i_0}r_{i_0}},$ then \eqref{5.0}  does not have a
bounded positive solution $\Psi=(\psi_1,\psi_2,\cdots,\psi_n)$ satisfying
\begin{equation}\label{as}
\lim_{\xi\to-\infty}\psi_i(\xi)=0,
\liminf_{\xi\to\infty}\psi_{i_0}(\xi)>0,
\xi\in\mathbb{R}, i\in I.
\end{equation}
Moreover, if \eqref{2} holds,  then \eqref{5.0} does not have a strictly  positive
solution such that
\[
0\le \psi_i(\xi)< (a_ic_{ii})^{-1}, i\in I, \xi\in\mathbb{R}.
\]
\end{theorem}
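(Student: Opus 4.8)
The whole argument will be about the $i_0$-th equation of \eqref{5.0},
\[
d_{i_0}\psi_{i_0}''(\xi)-c\psi_{i_0}'(\xi)+r_{i_0}\psi_{i_0}(\xi)\Big[1-\sum_{j=1}^{n}c_{i_0 j}\int_{-\tau}^{0}\psi_{j}(\xi+cs)\,d\overline{\eta}_{i_0 j}(s)\Big]=0,
\]
and will use that a speed $c<2\sqrt{d_{i_0}r_{i_0}}$ forces oscillation. As a preliminary step I would show that $\psi_{i_0}(\xi)>0$ for every $\xi$: writing the equation above as $d_{i_0}\psi_{i_0}''-c\psi_{i_0}'+q(\xi)\psi_{i_0}=0$ with $q$ bounded (because $\Psi$ is bounded) and applying the strong maximum principle to the operator $d_{i_0}\frac{d^{2}}{d\xi^{2}}-c\frac{d}{d\xi}-M$, $M=\sup|q|$, a nonnegative solution that is not identically zero cannot vanish anywhere; it is not identically zero since $\liminf_{\xi\to\infty}\psi_{i_0}(\xi)>0$. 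This positivity is what will legitimize dividing by $\psi_{i_0}$ below.

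Next I would localize near $-\infty$. Because $\lim_{\xi\to-\infty}\psi_{j}(\xi)=0$ for each $j$ and $\xi+cs\le\xi$ for $s\in[-\tau,0]$, $c>0$, there is $\xi_{0}$ with
\[
\sum_{j=1}^{n}c_{i_0 j}\int_{-\tau}^{0}\psi_{j}(\xi+cs)\,d\overline{\eta}_{i_0 j}(s)<\delta,\qquad \xi\le\xi_{0},
\]
where $\delta\in(0,1)$ is chosen small enough that $c^{2}<4d_{i_0}r_{i_0}(1-\delta)$ --- possible since $c<2\sqrt{d_{i_0}r_{i_0}}=\max_{i\in I}2\sqrt{d_{i}r_{i}}$. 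On $(-\infty,\xi_{0}]$, the equation together with $\psi_{i_0}>0$ gives $d_{i_0}\psi_{i_0}''-c\psi_{i_0}'+r_{i_0}(1-\delta)\psi_{i_0}<0$; substituting $\psi_{i_0}(\xi)=e^{c\xi/(2d_{i_0})}g(\xi)$ to eliminate the first-order term leaves
\[
g''(\xi)+\omega^{2}g(\xi)<0,\qquad \xi\le\xi_{0},\qquad \omega^{2}:=\frac{r_{i_0}(1-\delta)}{d_{i_0}}-\frac{c^{2}}{4d_{i_0}^{2}}>0,
\]
with $g>0$.

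The contradiction is the classical Sturm/Green's-identity obstruction. On the interval $J=[\xi_{0}-\pi/\omega,\xi_{0}]$ set $h(\xi)=\sin\big(\omega(\xi-\xi_{0}+\pi/\omega)\big)$, which vanishes at both endpoints of $J$, is positive in its interior, and satisfies $h''+\omega^{2}h=0$. Multiplying the last inequality by $h\ge0$, integrating over $J$, and using that the boundary terms vanish since $h$ does, one gets
\[
0>\int_{J}h\big(g''+\omega^{2}g\big)\,d\xi=\big[hg'-h'g\big]_{\xi_{0}-\pi/\omega}^{\xi_{0}}=\omega\big(g(\xi_{0})+g(\xi_{0}-\pi/\omega)\big)>0,
\]
which is absurd; this proves the first assertion. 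For the second assertion, a strictly positive solution with $0\le\psi_{i}<(a_{i}c_{ii})^{-1}$ converges to $u^{*}$ as $\xi\to\infty$ by Theorem \ref{th8} and \eqref{2}, so $\liminf_{\xi\to\infty}\psi_{i_0}(\xi)=u_{i_0}^{*}>0$; together with $\lim_{\xi\to-\infty}\psi_{i}(\xi)=0$ it then satisfies \eqref{as} and is excluded by the first assertion, since $c<2\sqrt{d_{i_0}r_{i_0}}$.

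The only points needing care, I expect, are bookkeeping rather than conceptual: making the localization near $-\infty$ genuinely control the delayed interaction terms (this is exactly where $\xi+cs\le\xi$ enters), choosing $\delta$ so that the perturbed characteristic roots of $d_{i_0}\lambda^{2}-c\lambda+r_{i_0}(1-\delta)=0$ are still nonreal, and the strong-maximum-principle step guaranteeing $\psi_{i_0}>0$. Everything else reduces to the impossibility of a positive function satisfying $g''+\omega^{2}g\le0$ on an interval of length $\pi/\omega$.
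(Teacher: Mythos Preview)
Your argument is correct and genuinely different from the paper's. The paper proves nonexistence by a PDE comparison with the Fisher equation and the asymptotic spreading result of Lemma~\ref{le1.1}(iii): it observes that $\psi_{i_0}(x+c't)$ is an upper solution of a Fisher-type equation whose spreading speed exceeds $c'$, so along a ray $|x|<2\sqrt{d_{i_0}r_{i_0}'}\,t$ the Fisher solution tends to a positive constant while $\psi_{i_0}(x+c't)\to 0$ (since $x+c't\to-\infty$), a contradiction. You instead work entirely at the ODE level: after localizing near $-\infty$ you reduce to $g''+\omega^{2}g<0$ with $g>0$ and $\omega^{2}>0$, and then use the Sturm/Green identity against $\sin(\omega(\xi-\xi_0+\pi/\omega))$ to obtain the contradiction. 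Your route is more elementary and self-contained, needing only basic linear ODE facts rather than spreading-speed theory; the paper's route, on the other hand, fits the theme of exploiting Lemma~\ref{le1.1} and makes the link between the minimal wave speed and the Fisher spreading speed explicit. For the second assertion you and the paper proceed identically, invoking Theorem~\ref{th8} to recover $\liminf_{\xi\to\infty}\psi_{i_0}(\xi)>0$ and then appealing to the first part; note that both arguments tacitly use $\lim_{\xi\to-\infty}\psi_i(\xi)=0$, which is the standing context of traveling waves connecting $0$ to $u^{*}$ (otherwise the constant state $u^{*}$ would be a counterexample). A minor point: you justify ``$\xi+cs\le\xi$'' by $c>0$, which is the paper's convention; in any case $|cs|\le|c|\tau$ is bounded, so $\psi_j(\xi+cs)\to0$ uniformly in $s$ as $\xi\to-\infty$ regardless of the sign of $c$.
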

\begin{proof}
Without loss of generality, we assume that $2\sqrt{d_1r_1}= \max_{i\in I}\{2\sqrt{d_ir_i}\}.$
If \eqref{2} holds, then \eqref{as} is obtained by Theorem \ref{th8}.
So we suppose that \eqref{as} is true. Were the statement false, then there exists some $c'\in (0,
2\sqrt{d_1r_1})$ such that \eqref{5.0} with $c=c'$ has a positive
solution $\Psi=(\psi_1,\psi_2,\cdots,\psi_n)$. Then \eqref{as}
implies that there exists $M>0$ such that
$\psi_1(\xi)=\psi_1(x+c't)$ satisfies
\begin{equation*}
\begin{cases}
\frac{\partial w(x,t)}{\partial t}\ge d_1 \Delta w(x,t)+r'_1w
(x,t)\left[
1-Mw (x,t)\right] ,\\
w(x,0)=\psi_1(x),
\end{cases}
\end{equation*}
where $4\sqrt{d_1r_1'}=2\sqrt{d_1r_1}+c'.$ In fact, if $\xi\to -
\infty,$ then $\lim_{\xi\to-\infty}\psi_i(\xi)=0$ ensures the
admissibility of $r_1'.$ Otherwise,
$\liminf_{\xi\to\infty}\psi_1(\xi)>0$ implies
the admissibility of $M$ (may be large but finite). By the
smoothness of $\psi_1(x+c't),$ we know that $\psi_1(x+c't)$ is an
upper solution to the following initial value problem
\begin{equation}\label{fush}
\begin{cases}
\frac{\partial w(x,t)}{\partial t}= d_1 \Delta w(x,t)+r'_1w
(x,t)\left[
1-Mw (x,t)\right] ,\\
w(x,0)=\psi_1(x).
\end{cases}
\end{equation}

Let $-2x= (2 \sqrt{d_1r_1'}+c')t,$
then $t\to\infty$ implies that $x+c't\to -\infty$ such that $u_1(x,t)=\psi_1(x+c't)\to 0.$
At the same time,
\[
-2x= \left(2 \sqrt{d_1r_1'}+c'\right)t< 4\sqrt{d_1r_1'}t
\]
and Lemma \ref{le1.1} lead to $\liminf_{t\to\infty}u_1(x,t) \ge 1/M$ in \eqref{fush}, a contradiction occurs.
The proof
is complete.
\end{proof}

\begin{remark}{\rm
Even if $c<  \max_{i\in I}\{2\sqrt{d_ir_i}\},$ \eqref{5.0} may have
a nontrivial positive solution $\Psi=(\psi_1,\psi_2,\cdots,\psi_n)$
such that $\max_{i\in I}\liminf_{\xi\to -\infty}\psi_i(\xi)>0.$ We
refer to Guo and Liang \cite{gl}, Huang \cite{huang} for some recent results of the corresponding undelayed equations.}
\end{remark}

\begin{remark}{\rm
Theorem \ref{no} completes the discussions of Li et al. \cite[Example 5.1]{llr} and Lin et al. \cite[Example 5.2]{llm} by presenting the nonexistence of traveling wave solutions. Therefore, it also improves some results for undelayed systems by confirming the nonexistence of traveling wave solutions without the requirement of monotonicity, for example, Ahmad and  Lazer \cite{ah} and Tang and Fife \cite{ta}. See next subsection.}
\end{remark}

\subsection{Existence of Nonmonotone Traveling Wave Solutions}
\noindent

If $\tau =0,$   Ahmad and Lazer \cite{ah}, Tang and Fife \cite{ta} proved the existence of monotone traveling wave solutions of \eqref{1}. Recently, Fang and Wu \cite{fang} also confirmed the existence of monotone traveling wave solutions if $\tau $ is small and $n=2$. In particular, Tang and Fife \cite{ta} thought that the monotonicity was a technical requirement and conjectured the existence of nonmonotone traveling wave solutions if $\tau =0$ and $n=2.$

In the previous section, we obtained the existence of traveling wave solutions connecting $0$ with $u^*.$ Because our requirement for the auxiliary functions was very weak,  we can present some sufficient conditions of the existence of nonmonotone traveling wave solutions if $a_i=1$ for all $i\in I$.

By rescaling, it suffices to consider
\eqref{1} with $c_{ii}=1, i\in I.$ Then \eqref{2} implies that $c_{ij}<1, i\neq j, i,j \in I,$ which further indicates that we can obtain a \emph{fixed} $q$ such that Lemma \ref{le5.1} holds for \emph{any} fixed $c$ (so $\eta$ can be a constant) and for \emph{all} $c_{ij}<1, i\neq j, i,j \in I.$

Therefore, for each fixed $c,$ there exist $m_i >0$ (e.g., $m_i=\sup_{\xi\in\mathbb{R}}\underline{\psi}_i(\xi)$) independent of $c_{ij}$ such that
\[
\sup_{\xi\in\mathbb{R}}\underline{\psi}_i(\xi) \ge  m_i, i\in I.
\]
Let $u^*$ be the function of $c_{ij}$ with $i\ne j, i,j\in I,$ then there exist $c_{ij}$ with $i\ne j, i,j\in I,$ such that \eqref{2}  is true and
\[
u_i^* \le m_i
\]
holds for some $i\in I.$
\eqref{5.} further indicates that
\[
\sup_{\xi\in \mathbb{R}}\psi_{i}(\xi) > u_i^*,
\]
and the existence of nonmonotone traveling wave solutions follows from \eqref{5.1}.

To further illustrate our conclusions, we also give some numerical simulations. Take
\begin{equation*}
\begin{cases}
0.0001\phi _1^{\prime \prime }(\xi)-\phi _1^{\prime }(\xi)+0.1\phi
_1(\xi)\left[
1-\phi _1(\xi)-0.55\phi _2(\xi)\right] =0, \\
0.05\phi _2^{\prime \prime }(\xi)-\phi _2^{\prime }(\xi)+0.5\phi
_2(\xi)\left[ 1-\phi _2(\xi)-0.75\phi _1(\xi)\right] =0
\end{cases}
\end{equation*}
such that
\[
k_1\approx 0.7659,k_2\approx 0.4255.
\]
Then the nonmonotonic traveling wave solutions are presented in Figures \ref{fig2}-\ref{fig3}.

\begin{figure}
\begin{center}
\includegraphics[width=110mm]{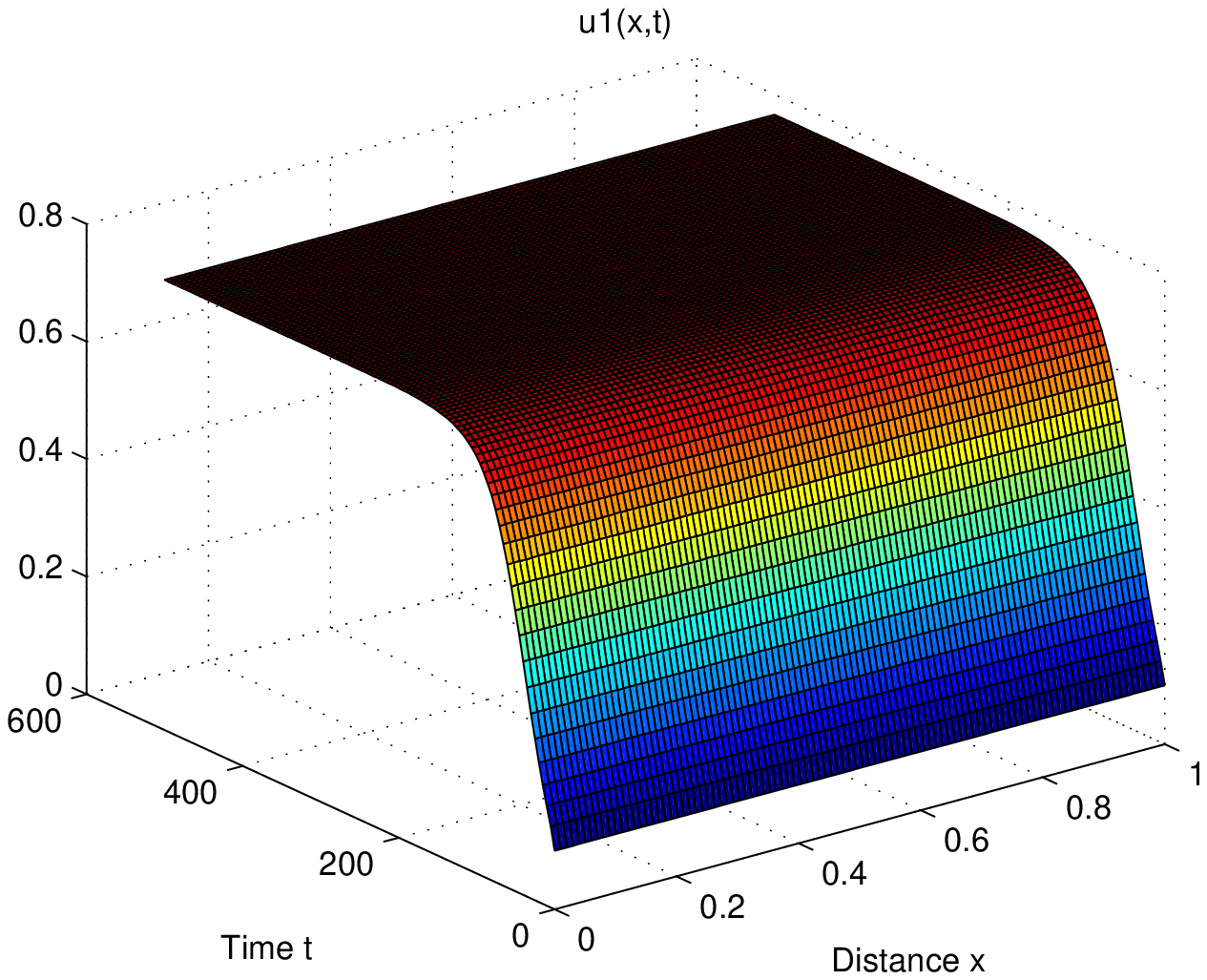}
\caption{The traveling wave solution $u_1 (x, t)$.}\label{fig2}
\end{center}
\end{figure}
\begin{figure}
\begin{center}
\includegraphics[width=110mm]{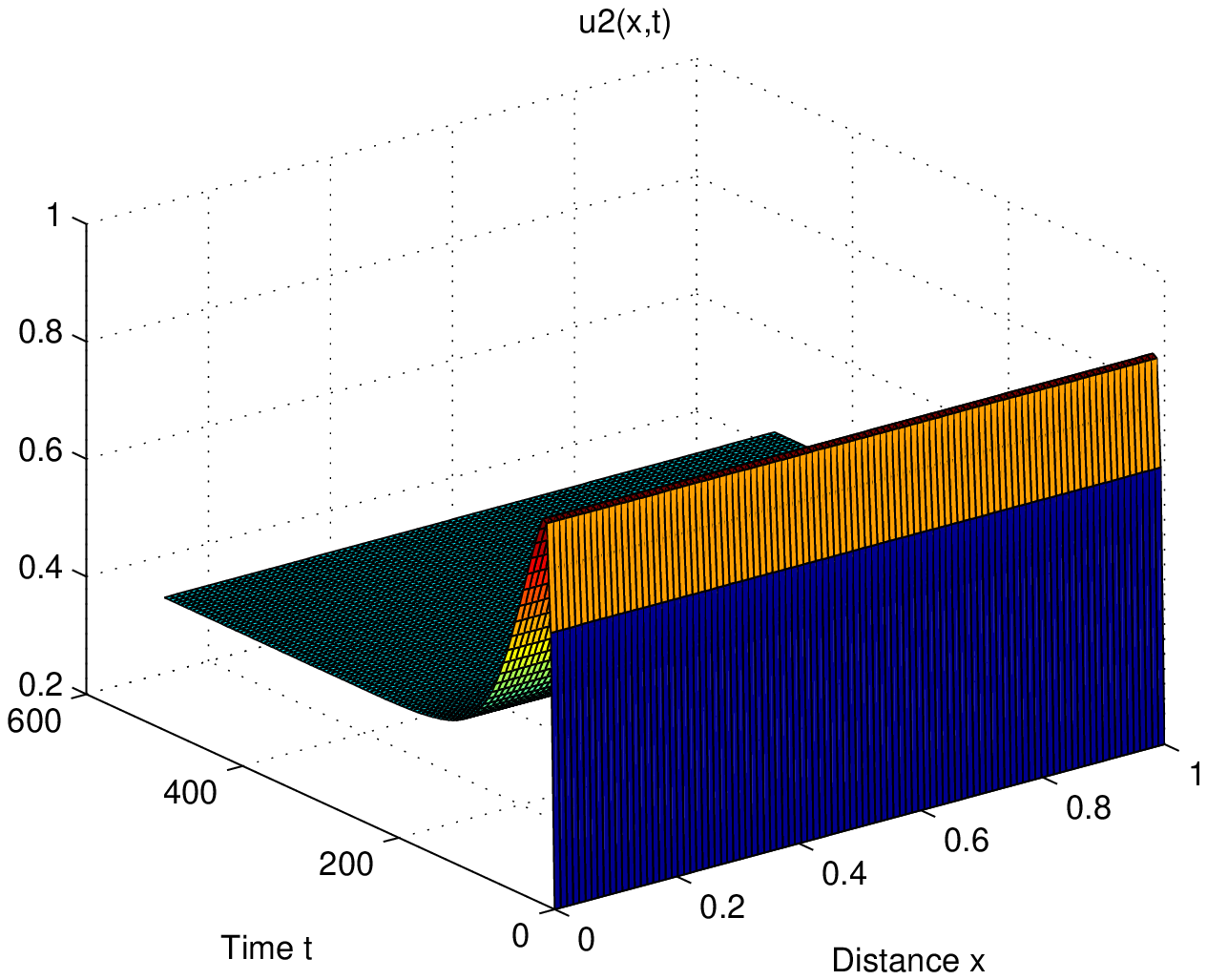}
\caption{The traveling wave solution $u_2 (x, t)$.}\label{fig3}
\end{center}
\end{figure}

\subsection{Further Results for $n=1$}
\noindent

It is difficult to consider the existence of \eqref{5.0}-\eqref{5.1} if $c=\max_{i\in I}\{2 \sqrt{d_ir_i}\}.$ But for  $n=1,$ we can obtain the existence of traveling wave solutions by passing to a limit function. When $n=1,$  \eqref{1} becomes
\begin{equation}
\dfrac{\partial v(x,t)}{\partial t}=d\Delta v(x,t)+rv(x,t)\left[
1-\int_{-\tau }^{0}v(x,t+s)d\overline{\zeta} (s)\right] ,  \label{0}
\end{equation}
herein $v\in \mathbb{R}, d>0,r>0, x\in\mathbb{R},t>0$ and
\[
\overline{\zeta}(s) \text{  is nondecreasing on }[-\tau,0] \text{
and }\overline{\zeta}(0)-\overline{\zeta}(-\tau)=1.
\]
In particular, we shall suppose that $\tau$ is the real
maximum delay involved in \eqref{0}, namely, there is no
$\tau_0<\tau$ such that $\int_{-\tau_0}^0 d\overline{\zeta} (s)=1.$
Denote
\[
b=\overline{\zeta}(0)-\overline{\zeta}(0-)
\]
and set
\[
\zeta(s)= \begin{cases}\overline{\zeta}(s),\,\,\,s\in \lbrack
-\tau ,0),\\
\overline{\zeta}(0-),\,\,\,s=0. \end{cases}
\]
Then \eqref{2} implies that \begin{equation}\label{3} b\in
(1/2,1]\end{equation} and \eqref{3}
will be imposed in this subsection.

Let $v(x,t)=\phi(x+ct)$ be a traveling wave solution of
\eqref{0}. Then it satisfies
\begin{equation}
d\phi ^{\prime \prime }(\xi )-c\phi ^{\prime }(\xi )+r\phi (\xi
)\left[ 1-b\phi (\xi )-\int_{-\tau }^{0}\phi(\xi+cs)d\zeta
(s)\right]=0, \,\, \xi\in\mathbb{R}  \label{2.0}
\end{equation}
and the asymptotic boundary conditions
\begin{equation}
\lim_{\xi \rightarrow -\infty }\phi (\xi )=0,\,\,\,\lim_{\xi \rightarrow
\infty }\phi (\xi )=1.  \label{2.1}
\end{equation}
Clearly, if $c>2\sqrt{dr}$ or $c< 2\sqrt{dr},$ then the existence or nonexistence of \eqref{2.0}-\eqref{2.1} has been addressed by the previous results. The following result deals with the case when $c=2\sqrt{dr}.$
\begin{theorem}\label{th3}
When $c=2 \sqrt{dr},$ \eqref{0} also has a  positive
traveling wave solution connecting $0$ with $1.$
\end{theorem}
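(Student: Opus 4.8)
The plan is to obtain the critical-speed traveling wave $\phi$ at $c=2\sqrt{dr}$ as a limit of the supercritical waves $\phi_c$ constructed in Theorem \ref{th5.0} (with $n=1$), as $c\downarrow 2\sqrt{dr}$. For each $c>2\sqrt{dr}$ we already have a strictly positive solution $\phi_c$ of \eqref{2.0} with $0<\underline\psi(\xi)<\phi_c(\xi)<\overline\psi(\xi)$, $\lim_{\xi\to-\infty}\phi_c(\xi)e^{-\gamma_1(c)\xi}=1$, and (by the remark after Theorem \ref{th5.1}) uniformly bounded $\phi_c'$, $\phi_c''$; moreover Theorem \ref{th8} gives $\lim_{\xi\to\infty}\phi_c(\xi)=1$ since $b\in(1/2,1]$ forces \eqref{2}. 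First I would normalize the translates: since each $\phi_c$ is continuous, increasing from $0$ to a value exceeding (say) $1/2$ somewhere, I can translate so that $\phi_c(0)=1/2$. Then using the uniform $C^2$ bounds and Arzel\`a--Ascoli with a diagonal argument over compact sets, extract a sequence $c_m\downarrow 2\sqrt{dr}$ with $\phi_{c_m}\to\phi$ in $C^1_{loc}(\mathbb{R})$, where $\phi$ satisfies \eqref{2.0} with $c=2\sqrt{dr}$, $0\le\phi\le 1$ (more precisely $\phi\le 1/b$ from the upper bound $(a c_{11})^{-1}=1/b$), and $\phi(0)=1/2$.

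Next I would pin down the limits at $\pm\infty$. The normalization $\phi(0)=1/2$ together with $0\le\phi$ prevents $\phi\equiv 0$; I expect $\phi$ to be nondecreasing (the supercritical profiles built from $\underline\psi,\overline\psi$ are monotone — or at least one can push monotonicity through the limit, or argue directly), so $\phi(-\infty)$ and $\phi(+\infty)$ both exist as limits of a monotone bounded function. A boundary limit of \eqref{2.0} shows each limit $\ell$ must satisfy $r\ell(1-\ell)=0$, hence $\ell\in\{0,1\}$; since $\phi(0)=1/2$ and $\phi$ is monotone with $\phi\le 1/b$, the only possibility consistent with monotonicity is $\phi(-\infty)=0$, $\phi(+\infty)=1$. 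For the $+\infty$ limit I can alternatively invoke Theorem \ref{th8} directly on $\phi$: it is a strictly positive (once we know $\phi>0$, e.g. by the strong maximum principle applied to \eqref{2.0} written as an elliptic inequality, or since a monotone solution with $\phi(0)=1/2$ that touched $0$ would be identically $0$) solution of \eqref{5.0} with $n=1$, $0\le\phi<1/b=(ac_{11})^{-1}$, so \eqref{2} gives $\lim_{\xi\to\infty}\phi(\xi)=1$. The left limit $\phi(-\infty)=0$ follows from monotonicity once we know $\phi\not\equiv 1$, which is clear from $\phi(0)=1/2$.

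The main obstacle I anticipate is ensuring the limit profile is \emph{nontrivial} at $-\infty$, i.e.\ excluding the degenerate possibilities $\phi\equiv 0$ or $\phi\equiv 1$ or a positive constant in $(0,1)$ — the normalization $\phi(0)=1/2$ is designed precisely to kill all of these, but one must make sure the translation is legitimate, i.e.\ that each $\phi_c$ actually attains the value $1/2$ (it does, being continuous with infimum $0$ and supremum $\ge\psi^-\ge 2-\sum c_{ij}(c_{jj}a_j)^{-1}$ near $+\infty$, which here equals $2-b^{-1}$; note $b\in(1/2,1]$ makes this positive but possibly less than $1/2$, so one should instead normalize at a value $\theta\in(0,2-b^{-1})$ and carry $\theta$ through, concluding $\phi(0)=\theta\in(0,1)$ which still forces $\phi\not\equiv 0,1$). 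A secondary technical point is the uniformity of the $C^2$ bounds as $c\downarrow 2\sqrt{dr}$: since $c$ ranges over a bounded interval and the bounds in the remark after Theorem \ref{th5.1} depend only on $\sup|\phi_c|\le 1/b$, the coefficients, and $c$, they are uniform, so Arzel\`a--Ascoli applies. With these points in hand the limit $\phi$ solves \eqref{2.0}--\eqref{2.1} at $c=2\sqrt{dr}$, completing the proof.
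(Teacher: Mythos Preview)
Your overall strategy --- take a sequence $c_m\downarrow 2\sqrt{dr}$, normalize by translation, extract a $C^1_{\mathrm{loc}}$ limit via Arzel\`a--Ascoli, and verify the boundary conditions --- is exactly the paper's approach, and your handling of the limit at $+\infty$ via Theorem~\ref{th8} matches as well. The genuine gap is in the $-\infty$ limit.

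You lean on monotonicity of the supercritical profiles $\phi_c$: ``the supercritical profiles built from $\underline\psi,\overline\psi$ are monotone --- or at least one can push monotonicity through the limit.'' But nothing in Theorem~\ref{th5.0} gives monotonicity; the lower solution $\underline\psi(\xi)=\max\{e^{\gamma_1\xi}-qe^{\eta\gamma_1\xi},0\}$ is itself nonmonotone, and for $b<1$ the equation \eqref{2.0} contains a genuine delay term and is not quasimonotone, so no comparison principle forces monotone profiles. (Indeed Section~5.4 of the paper is devoted to exhibiting \emph{nonmonotone} waves in the multi-species analogue.) Without monotonicity you cannot assert that $\phi(-\infty)$ exists as a limit, so the steady-state dichotomy $\ell\in\{0,1\}$ does not apply, and your conclusion $\phi(-\infty)=0$ is unsupported.

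The paper closes this gap differently. First it sharpens the normalization: not merely $\phi^n(0)=\theta$ but also $\phi^n(\xi)<\theta$ for all $\xi<0$ (with $\theta=\frac{2b-1}{8b}$), which passes to $\phi(\xi)\le\theta$ for $\xi\le 0$. Then it treats $\phi^-:=\liminf_{\xi\to-\infty}\phi(\xi)$ and $\phi^+:=\limsup_{\xi\to-\infty}\phi(\xi)$ separately. For $\phi^-$ it uses the integral representation $\phi=F(\phi)$ and dominated convergence to derive an inequality that is incompatible with $0<\phi^-\le\theta<\frac{2b-1}{b}$. For $\phi^+$ it runs an asymptotic-spreading argument: $\phi(x+ct)$ is an upper solution of a Fisher equation whose spreading speed exceeds $c=2\sqrt{dr}$, so any recurring bump of height $\phi^+/2$ along a sequence $\xi_j\to-\infty$ would, after time $T$, be pushed above $\theta$ by Lemma~\ref{le1.1}(iii), contradicting the normalization. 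This spreading argument is the missing idea in your proposal; you should replace the monotonicity step with it.
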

\begin{proof}
Let $\{c_n\}$ be a decreasing sequence with $c_n<4 \sqrt{dr}$ and
$c_n\to 2\sqrt{dr}, n\to \infty$. Then for each $c_n,$ \eqref{0} has
a positive traveling wave solution connecting $0$ with $1,$ denoted
by $\phi^n(\xi).$ It follows that  $0\le \phi^n(\xi)\le
1/b,\xi\in\mathbb{R},n\in\mathbb{N}.$ Note that a traveling wave
solution is invariant in the sense of phase shift, so we assume
that
\begin{equation}\label{0000}
\phi^n(0)=\frac{2b-1}{8b},\,\,\phi^n(\xi)<\frac{2b-1}{8b}, \,\, \xi<0.
\end{equation}
By \eqref{2.1}, we know that \eqref{0000} is admissible. For $n\in
\mathbb{N}, \xi\in \mathbb{R},$ it is evident that $\phi^n(\xi)$ are
equicontinuous and bounded in the sense of the supremum norm. By Ascoli-Arzela lemma and a nested subsequence argument,  $\{\phi^n(\xi)\}$ has a subsequence, still
denoted by $\{\phi^n(\xi)\},$ such that
\[
\phi^n(\xi) \to \phi(\xi),\,\, n\to\infty
\]
for a continuous function $\phi(\xi)$.
We  see that the above convergence is
pointwise on $\mathbb{R}$  and is also uniform on any bounded interval of
$\mathbb{R}.$

Note that
\[ \min \{e^{\gamma _{1}(c_{n})(\xi -s)},e^{\gamma
_{2}(c_{n})(\xi
-s)}\}\rightarrow \min \{e^{\gamma _{1}(2\sqrt{dr})(\xi -s)},e^{\gamma _{2}(2%
\sqrt{dr})(\xi -s)}\},n\rightarrow \infty ,
\]%
for any given $\xi \in \mathbb{R},$ and the convergence in $s$ is
uniform for $s\in \mathbb{R}.$ Applying the dominated convergence
theorem in $F,$ we know that $\phi(\xi)$ is a fixed point of $F$ and
a solution to \eqref{2.0}.

By \eqref{2.1} and \eqref{0000}, $\phi$ also satisfies
\[
\phi(0)=\frac{2b-1}{8b},\,\,\phi(\xi)\le \frac{2b-1}{8b}, \,\, \xi<0,\,\,
\phi\in X_{[0,1/b]}.
\]
Namely, $\phi(\xi)$ is a  positive solution to \eqref{2.0}, which is uniformly continuous for $\xi\in\mathbb{R}$. Now, we are in a position to verify the
asymptotic boundary conditions \eqref{2.1}. Due to $\phi (0)>0$ and Lemma \ref{le1.1}, $\liminf_{\xi\to\infty}\phi(\xi) >0$ holds.  By Theorem \ref{th8},
we obtain $\lim_{\xi\to\infty}\phi(\xi)=1.$

Define
\[
\limsup_{\xi\to -\infty}\phi(\xi)=\phi^+, \,\,\liminf_{\xi\to
-\infty}\phi(\xi)=\phi^-.
\]
It is clear that
\[
0\le \phi^-\le \phi^+\le  \frac{2b-1}{8b}.
\]
If $\phi^->0,$ then the dominated convergence
theorem in $F$ implies that
\[
4\phi^-\ge 4 \phi^-+ \phi^-(1-b\phi^--(1-b)\phi^+),
\]
which is impossible by \eqref{0000}. Hence, $\phi^-=0$ follows.

If $\phi^+>0,$ then there exist $\{\xi _j\}, j\in \mathbb{N}$ with $\xi_j\to -\infty, j\to \infty$ such that
$\phi(\xi_j)\to \phi^+, j\to\infty .$ Using the uniform continuity
of $\phi,$ there exists $\delta >0$ such that
\begin{equation}\label{2.9}
\frac{2b-1}{8b} \ge  \phi(\xi)\ge \phi^+/2,\,\, \xi\in (\xi_j-\delta,
\xi_j+ \delta), \,\,j\to\infty.
\end{equation}

We now return to the Fisher equation
\begin{equation*}
\begin{cases}
\frac{\partial z(x,t)}{\partial t}=d \Delta z(x,t)+rz (x,t)\left[
1-(1-b)/b-bz(x,t)\right] ,\\
z(x,0)=z(x),
\end{cases}
\end{equation*}
of which an upper solution is $\phi(x+ct)$ if $z(x)\le \phi(x).$ Let
$z(x)$ satisfy
\begin{description}
\item[(z1)] $z(x)\in X,$
\item[(z2)] $z(x)=\phi^+/2 ,$ $x\in [-\delta/2,  \delta/2],$
\item[(z3)] $z(\pm \delta)=0 $ and $z(x)$ is decreasing (increasing)
if $x\in (\delta/2, \delta)$ ($x\in (-\delta, -\delta/2)$),
\item[(z4)] $z(x)=0$ if $|x|>\delta.$
\end{description}
Because of $\phi(0)=  \frac{2b-1}{8b}\ge \phi^+$ and the uniform continuity of $\phi (\xi),$ we see that $\delta
>0$ is admissible.
Then there exists $T>0$ such that
\begin{equation}\label{2.10}
z(0,t)>\frac{2b-1}{4b}>\phi^+,\,\,t\ge T \text{ (see Lemma
\ref{le1.1}).}
\end{equation}

For each $j\in \mathbb{N},$ there exists $j' \in \mathbb{N}$ such that
$\xi_{j'}<\xi_j-2\sqrt{dr}T- 2\delta .$ When $\xi_{j}$ and $\xi_{j'}$
satisfy \eqref{2.9}, then \eqref{2.10} and Lemma \ref{le1.1}
imply that
\[\phi(\xi_j)>\frac{2b-1}{4b}>\phi^+,\]
which is impossible by \eqref{2.9} and the arbitrariness of $j$.
Therefore, $\phi^+=0$. The proof is complete.
\end{proof}

\section{Discussion}
\noindent

Delay is a very common process in many biological and physical phenomena and
many important and realistic models with delay have been proposed to describe various problems in applied subjects.
The fundamental theory of delayed differential equations have been well-developed,
we refer to the monographs of Hale and Verduyn Lunel \cite{hale} and Wu \cite{wu}.
It is well-known that the dynamics between the delayed
and undelayed systems may be significantly different; for instance, the
delayed Logistic equation or Hutchinson equation exhibits
nontrivial periodic solutions while all the nonnegative solutions of
the Logistic equation converge to the positive steady state.
Moreover, to study delayed systems, more complex phase spaces
than that of the corresponding undelayed systems are required. The investigation of
traveling wave solutions of delayed systems is also more difficult
than that of the corresponding undelayed systems, at least the phase plane method
which is powerful in studying undelayed systems meets some
difficulties in the study of delayed systems.

Of course, if a system is (local) quasimonotone, then the classical
theory established for monotone semiflows is applicable and there are plentiful results.
For example, the existence, nonexistence,
minimal wave speed,  uniqueness and stability of traveling wave
solutions have been widely studied and many sharp results have been
established, see Liang and Zhao \cite{liangzhao}, Schaaf \cite{schaaf}, Thieme and Zhao \cite{t},
Smith and Zhao \cite{smithzhao}, and Wang et al. \cite{wangli1,wang3}.
If the system is not quasimonotone, then the study becomes harder
and some new phenomena can occur, e.g., the existence of nonmonotone
traveling wave solutions in scalar equations, see Faria and Trofimchuk \cite{fa1}. When the delay is small
enough, some nice results on traveling wave solutions can also be
obtained by different techniques such as exponential ordering,
perturbation and so on, see Ai \cite{ai}, Fang and Wu \cite{fa}, Lin et al. \cite{llm},
Ou and Wu \cite{ou}, Wang et al. \cite{wang1}, and Wu and Zou \cite{wuzou2}.

However, if the delay is large, these techniques cannot deal with the
traveling wave solutions of delayed systems including \eqref{1} and \eqref{0}. In this paper, we
applied generalized upper and lower solutions to seek after the positive traveling wave
solutions.  Since these systems do not satisfy the
quasimonotone condition, the limit behavior of traveling wave
solutions cannot be obtained by the monotonicity of traveling wave
solutions and the dominated convergence theorem.  In particular, for
the case $c=2\sqrt{dr}$ in \eqref{0}, the asymptotic behavior  of
traveling wave solutions cannot be considered by the techniques of
monotone traveling wave solutions in Liang and Zhao \cite{liangzhao} and Thieme and Zhao \cite{t}. In this paper, we studied the asymptotic behavior of traveling wave solutions of  \eqref{1} and \eqref{0} by combining the idea of contracting rectangles with the theory of asymptotic spreading.

Our results imply that if there exists instantaneous self-limitation effect, then the \emph{large delays} appearing in the
intra-specific competition terms may not affect the persistence of traveling wave solutions.
However, very likely large delay may also lead to some significant
differences between the traveling wave solutions of delayed and
undelayed systems, e.g., the nonexistence of monotone traveling wave solutions of \eqref{0}, which will be
investigated in our future studies.

\vspace*{0.3cm}
\noindent{\bf Acknowledgements.} The authors would like to thank an anonymous reviewer for his/her helpful
comments and Yanli Huang for her valuable suggestions. This research was partially supported by the the National Natural Science
Foundation of China (11101194) and the National Science Foundation (DMS-1022728).

\end{document}